\theoremstyle{plain}
\newtheorem{thm}{Theorem} 
\newtheorem{lemma}[thm]{Lemma}
\newtheorem{prop}[thm]{Proposition}
 \theoremstyle{remark}
\newtheorem{remark}[thm]{Remark}
\newtheorem*{claim}{Claim}
\newcommand{\al}{\alpha}
\newcommand{\be}{\beta}
\newcommand{\de}{\delta}
\newcommand{\ga}{\gamma}
\newcommand{\ep}{\varepsilon}
\newcommand{\si}{\sigma}
\newcommand{\De}{\Delta}
\newcommand{\Si}{\Sigma}
\newcommand{\SL}{{\rm SL}}
\newcommand{\GL}{{\rm GL}}
\newcommand{\ZZ}{{\mathbb Z}}
\newcommand{\FF}{{\mathbb F}}
\newcommand{\CC}{{\mathbb C}}
\newcommand{\NN}{{\mathbb N}}
\newcommand{\Aut}{\operatorname{Aut}}
\newcommand{\Hom}{\operatorname{Hom}}
\newcommand{\sm}{{\smallsetminus}}
\def\co{\colon\thinspace}
\def\modd{\, {\rm mod}\,}
\def\id{\operatorname{id}}
\begin{document}
\title[Metabelian SL$(n,\CC)$ representations of knot groups IV]{Metabelian SL$(n,\CC)$ representations of knot groups IV: Twisted Alexander polynomials}

\author{Hans U. Boden}
\address{Mathematics \& Statistics, McMaster University,
Hamilton, Ontario} \email{boden@mcmaster.ca}
\thanks{The first author was supported by a grant from the Natural Sciences and Engineering Research Council of Canada.}

\author{Stefan Friedl}
\address{Mathematisches Institut\\ Universit\"at zu K\"oln\\   Germany}
\email{sfriedl@gmail.com}

\subjclass[2010]{Primary: 57M25, Secondary: 20C15}
\keywords{metabelian representation, knot group, twisted Alexander polynomial}

\date{September 18, 2012}
\begin{abstract}
In this paper we will study properties of twisted Alexander polynomials of knots corresponding to metabelian representations.
In particular we answer a question of Wada about the twisted Alexander polynomial associated to the tensor product of two representations, and we settle several conjectures of Hirasawa and Murasugi.
 \end{abstract}

\maketitle

\section{Introduction}

Suppose $K\subset S^3$ is an  oriented knot.
We write $X_K=S^3\sm \nu K,$ where $\nu K$ denotes an open tubular neighborhood of $K$. Throughout the paper
we also write $\pi_K:=\pi_1(X_K)$ for the knot group.
Given  a representation $\al \co \pi_K\to \GL(n,\CC)$, Wada \cite{Wa94}, building on work of Lin \cite{Li01}, introduced an invariant $\De_K^\al(t)\in \CC(t)$.
This invariant is often referred to as the `twisted Alexander polynomial', `twisted torsion' or `Wada's invariant' of $(K,\al)$.
We refer to Section \ref{section:deftap} and \cite{Wa94,FV10} for more details.
If $\ep$ is the trivial rank one representation, then $\De_K^\ep(t)=\De_K(t)/(1-t)$, where $\De_K(t)$
denotes the classical Alexander polynomial of $K$.

\subsection{Wada's question}

Let $K\subset S^3$ be an oriented knot  and let $\al$ and $\be$ be representations of $\pi_K$.
It follows easily from the definitions that
\[ \De_K^{\al\oplus \be}(t)=\De_K^\al(t) \cdot \De_K^\be(t),\]
i.e. the twisted Alexander polynomial $\De_K^{\al\oplus \be}(t)$ corresponding to the \emph{direct sum} $\al \oplus \be$ equals the product of the twisted Alexander polynomials $\De_K^{\al}(t)$ and $\De_K^{\be}(t)$ corresponding to the two representations $\al$ and $\be$.

Wada \cite{Wa11} asked whether something similar holds for  the twisted Alexander polynomial $\De_K^{\al\otimes \be}(t)$ associated with the \emph{tensor product} $\al \otimes \be$, i.e. is  $\De_K^{\al\otimes \be}(t)$  determined by the twisted Alexander polynomials $\De_K^{\al}(t)$ and $\De_K^{\be}(t)$ corresponding to the two representations $\al$ and $\be$? We answer this question in the negative. More precisely in
Section \ref{section:wada} we prove the following result:

\begin{thm}\label{thm:wadanegativeintro}
There exist two knots $K$ and $K'$, an isomorphism
of the metabelian quotient groups
\[ \Phi\co \pi_K/\pi_K^{(2)}\to \pi_{K'}/\pi_{K'}^{(2)},\]
and metabelian unitary representations $\al'$ and $\be'$ of $\pi_{K'}$ such that,
for the induced  metabelian representations $\al$ and $\be$ of $\pi_K$ given by
$\al =\al' \circ \Phi$ and $\be = \be' \circ \Phi,$ we have that
\[ \De_K^{\al}(t)=  \De_{K'}^{\al'}(t) \text{ and }  \De_K^{\be}(t)=   \De_{K'}^{\be'}(t), \text{ but }  \De_K^{\al \otimes \be}(t) \ne \De_{K'}^{\al'\otimes \be'}(t).\]
\end{thm}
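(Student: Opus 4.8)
The plan is to produce the pair $(K,K')$ as an explicit, computed example, the point being that the twisted Alexander polynomial of a metabelian representation is a Reidemeister-torsion invariant of the whole knot group and not merely of its metabelian quotient $\pi_K/\pi_K^{(2)}$. The first thing to record is that a metabelian representation factors through $\pi_K/\pi_K^{(2)}$, so an isomorphism $\Phi$ of metabelian quotients pulls representations of $\pi_{K'}$ back to representations of $\pi_K$; crucially $\al\otimes\be=(\al'\otimes\be')\circ\Phi$ as well, since a tensor product of representations factoring through the quotient still factors through it. Thus all three of $\al,\be,\al\otimes\be$ are honestly transported along $\Phi$, and the content of the theorem is precisely that equality of twisted Alexander polynomials is \emph{not} a formal consequence of $\Phi$ being an isomorphism: the torsion of $X_K$ uses its cellular chain complex, i.e. the full group $\pi_K$, and can therefore separate knots with isomorphic metabelian quotients.

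The main computational lever I would use is the reduction to cyclic covers. Taking $\al'=\operatorname{Ind}_{N'}^{\pi_{K'}}\chi_1$ and $\be'=\operatorname{Ind}_{N'}^{\pi_{K'}}\chi_2$ for one-dimensional characters $\chi_1,\chi_2$ of $N'=\ker(\pi_{K'}\to\ZZ/n)$, Shapiro's lemma identifies $\De_{K'}^{\al'}$ with a one-dimensional twisted Alexander polynomial of the $n$-fold cyclic cover $X_{K'}^{(n)}$ with coefficient $\chi_1$, and similarly for $\be'$. Because $N=\ker(\pi_K\to\ZZ/n)\trianglelefteq\pi_K$ is normal, the projection formula together with Mackey's theorem gives the decomposition
\[ \al\otimes\be=\operatorname{Ind}_N^{\pi_K}\chi_1\otimes\operatorname{Ind}_N^{\pi_K}\chi_2=\bigoplus_{g\in \pi_K/N}\operatorname{Ind}_N^{\pi_K}\!\big(\chi_1\cdot{}^g\chi_2\big), \]
so by the direct-sum formula $\De_K^{\al\otimes\be}=\prod_g \De^{\operatorname{Ind}(\chi_1\cdot{}^g\chi_2)}_K$, which by Shapiro again is a product of one-dimensional twisted Alexander polynomials of $X_K^{(n)}$ attached to the product characters $\chi_1\cdot{}^g\chi_2$. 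The ``diagonal'' data governing $\De_K^\al$ and $\De_K^\be$ thus sits inside this product as the single-character terms, while the genuinely new information lives in the cross terms with $g\neq e$.

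The key structural observation is that these cover-level twisted polynomials are finer than the metabelian quotient. A character $\chi_1\cdot{}^g\chi_2$ that is nontrivial on the image of $\pi_K^{(1)}$ in $H_1(X_K^{(n)})$ produces a twisted homology of the infinite cyclic cover $\tilde X_K$ carrying a nontrivial local system on $H_1(\tilde X_K)=\pi_K^{(1)}/\pi_K^{(2)}$; computing it requires $\pi_K^{(1)}$ beyond its abelianization, i.e. it probes $\pi_K$ at the level of $\pi_K^{(2)}$ and is \emph{not} an invariant of $\pi_K/\pi_K^{(2)}$. This is exactly the slot in which two knots with isomorphic metabelian quotients may differ. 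The strategy is therefore to arrange $K,K'$ and $\chi_1,\chi_2$ so that the cover-level invariants entering $\De_K^\al$ and $\De_K^\be$ coincide under $\Phi$ (so the two individual equalities hold), while the cover-level invariant attached to a cross-term character $\chi_1\cdot{}^g\chi_2$ genuinely distinguishes $X_K^{(n)}$ from $X_{K'}^{(n)}$.

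It remains to realise this with explicit knots and characters. I would search among pairs $K,K'$ with isomorphic Alexander modules (so that $\Phi$ exists) whose cyclic covers carry different higher-order twisted torsion for some product character, write down presentations and the induced unitary metabelian representations, and compute every relevant twisted Alexander polynomial either by Fox calculus or directly from the torsion of the cyclic cover. The hard part will be this last step: one must pin down an explicit pair for which all the single-character contributions agree yet a single cross term disagrees, and then carry out the twisted-torsion computation cleanly enough to certify the strict inequality $\De_K^{\al\otimes\be}\neq\De_{K'}^{\al'\otimes\be'}$ while preserving $\De_K^\al=\De_{K'}^{\al'}$ and $\De_K^\be=\De_{K'}^{\be'}$.
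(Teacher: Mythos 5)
Your conceptual frame is sound: metabelian representations and their tensor products all factor through $\pi_K/\pi_K^{(2)}$, so they do transport along $\Phi$, and the twisted Alexander polynomial is a torsion invariant of the knot exterior rather than of the metabelian quotient, which is why a theorem like this can hold at all. Your Mackey/Shapiro decomposition of $\al\otimes\be$ into induced representations of product characters is also correct for characters induced from a common kernel $N=\ker(\pi_K\to\ZZ/n)$ (and is a reasonable substitute for the paper's Proposition \ref{prop:tensor}, which instead tensors representations induced from subgroups of \emph{coprime} index and obtains a single irreducible metabelian representation). But there is a genuine gap: the theorem is an existence statement, and your proposal ends exactly where the proof has to begin. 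You defer to ``search among pairs $K,K'$ with isomorphic Alexander modules whose cyclic covers carry different higher-order twisted torsion for some product character,'' with no mechanism that forces the single-character contributions to agree while a cross term disagrees, and no candidate pair; the ``hard part'' you acknowledge is the entire content of the theorem, so nothing has been proved.

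The paper closes this gap with a satellite construction, and you would need this or something equally structured to finish. Take $K$ the trefoil, a curve $A\subset X_K$ unknotted in $S^3$ and null-homologous in $X_K$, and set $S=S(K,C,A)$, $S'=S(K,C',A)$ for two companions $C,C'$. Since $[A]\in\pi_K^{(1)}$, Lemma \ref{lem:samealexandermodule} identifies both $\pi_S/\pi_S^{(2)}$ and $\pi_{S'}/\pi_{S'}^{(2)}$ with $\pi_K/\pi_K^{(2)}$, giving $\Phi$ for free, and Lemma \ref{lemmahofsat} gives complete control of how the twisted polynomial depends on the companion: $\De_S^{\al}(t)\doteq \De_K^{\al}(t)\cdot\prod_i\De_C(z_i)$, where the $z_i$ are the eigenvalues of $\al(A)$. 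This converts the whole problem into elementary arithmetic: one only needs knots $C,C'$ and root-of-unity multisets $Z_1,Z_2$ (coming from characters of orders $2$ and $3$ on the trefoil's Alexander module, as in Lemma \ref{lem:trefoil}) with $\prod_{z\in Z_1}\De_C(z)=\pm\prod_{z\in Z_1}\De_{C'}(z)$ and $\prod_{z\in Z_2}\De_C(z)=\pm\prod_{z\in Z_2}\De_{C'}(z)$, but $\prod_{z\in Z}\De_C(z)\neq\pm\prod_{z\in Z}\De_{C'}(z)$ for $Z=Z_1\cdot Z_2$; the paper exhibits $C=9_{30}$ and $C'=11a359$, detected via the orders of the homology of their branched covers. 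Without a device like this, which makes the dependence on the knot \emph{beyond} its metabelian quotient explicit and computable, your search has no handle with which to certify either the two required equalities or the required inequality.
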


\subsection{The Hirasawa-Murasugi conjectures}

Hirasawa and Murasugi \cite{HM09,HM09b} studied in detail twisted Alexander polynomials corresponding to metabelian representations. They developed techniques for providing explicit computations of the twisted Alexander polynomials and stated several conjectures based on their results. We now recall their conjectures.

Given $n\in \NN$, let $\phi_n(t)$ be the $n$-th cyclotomic polynomial, i.e. set
\[\phi_n(t)=\prod_{\substack{k\in \{1,\dots,n\} \\ (k,n)=1}}(t-e^{2\pi ik/n})\in \ZZ[t^{\pm 1}].\]
Given a prime number $p$, let $\FF_p$ denote the finite field with $p$ elements
and set
\[ A_{p,n} = \FF_p[t^{\pm 1}]/(\phi_n(t)).\]
Note that $A_{p,n}$ is a finite group of order $p^k$ isomorphic to $(\ZZ/p)^k,$ where $k= \deg \phi_n(t)$.
There is an action of $\ZZ/n$ on $A_{p,n}$  defined by letting
 $i \in \ZZ/n$ act  by multiplication by $t^i$, and this action is well-defined because $\phi_n(t)|(t^n-1)$.
 We consider the semidirect product
\[ \ZZ/n\ltimes A_{p,n}.\]

Letting $A_{p,n}$  act on itself by addition, we obtain an action of
the semidirect product $\ZZ/n\ltimes A_{p,n}$ on the abelian group $A_{p,n}$. We then consider the resulting representation
\[ \ga \co  \ZZ/n\ltimes  A_{p,n} \to \Aut_\ZZ \left( \ZZ\left[A_{p,n}\right]\right).\]
The following conjecture was formulated by Murasugi and Hirasawa \cite[Conjecture~6.1]{HM09}:
\\

\noindent \textbf{Conjecture A (Hirasawa-Murasugi)}\emph{
Let $K$ be an oriented knot  together with an epimorphism
$\al \co \pi_1(X_K)\to \ZZ/n\ltimes A_{p,n},$ where $p$ is a prime.
Suppose that  $n$ and $p$ are coprime and that $\phi_n(t)$ is irreducible over $\FF_p[t^{\pm 1}]$.
 Then
\[ \De_{K}^{\ga \circ \al}(t)=\frac{\De_K(t)}{1-t} \cdot F(t),\]
where $F(t)$ is an integer polynomial in $t^n$.} \\

In Section \ref{section:conja} we will see that this conjecture also implies
 \cite[Conjecture~A]{HM09}.

\bigskip

 In \cite{HM09b}, Hirasawa and Murasugi also studied twisted Alexander polynomials corresponding to certain metacyclic representations, and we recall their further conjectures in this context.

To begin, we introduce the metacyclic groups denoted $G(m,p|k)$ in \cite{HM09b}.
Here $p$ is an odd prime, $m$ is a positive integer, and $k$ is an integer
which is a primitive $m$-th root of $1$ modulo $p$,
i.e. $k$ has the property that
\[ k^m\equiv 1\modd p \text{ but } k^\ell\not\equiv 1\modd p \text{  for } \ell=1,\dots,m-1.\]
Hirasawa and Murasugi  then define the group
\[ G(m,p|k):=\langle x,y \mid x^m=y^p=1\text{ and }xyx^{-1}=y^k\rangle.\]
Note that $G(m,p|k)$ is isomorphic to a  semidirect product of the form $\ZZ/m \ltimes \ZZ/p$, and there is a $G(m,p|k)$ action on $\ZZ/p$ given by
\[ y\cdot n=n-1 \text{ and } x\cdot n=kn \text{ for }n\in \ZZ/p.\]
This action defines an  embedding of  $G(m,p|k)$ into the symmetric group $S_p$ and hence in $\GL(p,\ZZ)$ via permutation matrices.
Letting $\varrho \co G(m,p|k)\to \GL(p,\ZZ)$ denote this representation,
we can now state Conjecture A of  \cite{HM09b}:
\\

\noindent \textbf{Conjecture A$\boldmath '$ (Hirasawa-Murasugi)}\emph{
Let $K$ be an oriented knot  together with an epimorphism
$\al \co \pi_1(X_K)\to G(m,p|k)$. Then
\[ \De_{K}^{\varrho \circ \al}(t)=\frac{\De_K(t)}{1-t} \cdot F(t),\]
where $F(t)$ is an integer polynomial in $t^m$.} \\

Taking $k=-1$ and $m=2$ in $G(m, p|k)$ just gives the dihedral group $D_p$ of order $2p$,
which is the group with presentation
\begin{equation} \label{eqndihedralgp}
D_p = \langle x,y \mid x^2=y^p=1\text{ and }xyx=y^{-1}\rangle.
\end{equation}
For the dihedral groups,
Hirasawa and Murasugi \cite[Conjecture~B]{HM09b}  proposed the following refinement of Conjecture A$'$:
\\

\noindent
\textbf{Conjecture B (Hirasawa-Murasugi)}\emph{
Let $K$ be an oriented knot  together with an epimorphism
$\al \co \pi_1(X_K)\to D_p$, where $p=2\ell+1$ is an odd prime.
\begin{enumerate}
\item There exists  an integer polynomial $f(t)$ such that
\[ \De_{K}^{\varrho \circ \al}(t)=\frac{\De_K(t)}{1-t} \cdot f(t)f(-t).\]
\item The following equality holds modulo $p$:
\[ \De_{K}^{\varrho \circ \al}(t)=\left(\frac{\De_K(t)}{1-t}\right)^{\ell+1}\cdot \left(\frac{\De_K(-t)}{1+t}\right)^\ell\, \modd p.\]
\end{enumerate}}

Note that the product $f(t)f(-t)$ is necessarily a polynomial in $t^2$. In this sense Conjecture B (1) is indeed a refinement of Conjecture A$'$ for epimorphisms onto dihedral groups.

Hirasawa and Murasugi provide extensive computational evidence for Conjectures A and  A$'$ in the papers \cite{HM09, HM09b}.
They also establish Conjecture B for certain two-bridge knots in \cite{HM09b}.
In  \cite{HS12}, Hoste and Shanahan give further  computations of the twisted Alexander polynomials
for many two-bridge and torus knots, and they provide additional supporting evidence for Conjecture B.

In Section \ref{section:hm} we settle the Hirasawa-Murasugi conjectures. More precisely, we prove the following result:

\begin{thm}
\begin{enumerate}
\item Conjecture A holds.
\item Conjecture A$'$ holds.
\item There are knots for which Conjecture B (1) fails.
\item Conjecture B (2) holds.
\end{enumerate}
\end{thm}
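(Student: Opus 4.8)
The plan is to treat all four parts through a single observation: both $\gamma$ and $\varrho$ are the permutation representation of a semidirect product $G=\ZZ/n\ltimes A$ (with $A=A_{p,n}$, resp. $A=\ZZ/p$) on the underlying set of $A$, where $A$ acts by translation and the cyclic factor $\ZZ/n$ (resp. $\ZZ/m$) by its given action. Since the stabilizer of $0\in A$ is exactly the complement $H=\ZZ/n$ (resp. $\ZZ/m$), in both cases the representation is the induced representation $\ind_H^G\mathbb{1}$. I would first record two general principles: that twisting a representation $\tau$ of $\pi_K$ by a character $\psi$ sending the meridian to $\zeta$ amounts to the substitution $t\mapsto\zeta t$, i.e. $\De_K^{\tau\otimes\psi}(t)=\De_K^{\tau}(\zeta t)$; and that Wada's invariant is multiplicative over direct sums and, over a field, over arbitrary short exact sequences of representations.

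For parts (1) and (2), decompose $\ind_H^G\mathbb{1}$ over $\CC$. Using the hypothesis that $\phi_n$ is irreducible over $\FF_p$ (so that $A_{p,n}$ is the field $\FF_{p^k}$ and $H$ acts freely on $A\sm\{0\}$ by a primitive $n$-th root of unity), Frobenius reciprocity gives $\ind_H^G\mathbb{1}\cong\mathbb{1}\oplus\bigoplus_{\mathcal{O}}W_{\mathcal{O}}$, where $\mathcal{O}$ runs over the free $H$-orbits on the nontrivial characters of $A$ and each $W_{\mathcal{O}}$ is an $n$-dimensional irreducible, occurring with multiplicity one because its restriction to $H$ is the regular representation. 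Multiplicativity then yields $\De_K^{\gamma\circ\al}(t)=\De_K^{\mathbb{1}\circ\al}(t)\cdot F(t)=\tfrac{\De_K(t)}{1-t}F(t)$ with $F(t)=\prod_{\mathcal{O}}\De_K^{W_{\mathcal{O}}\circ\al}(t)$. The decisive point is that $W_{\mathcal{O}}\otimes\psi_1\cong W_{\mathcal{O}}$ for the generating character $\psi_1$ of $H=G/A$, since $\psi_1$ is trivial on $A$ and induction absorbs it; hence $F$ is invariant under $t\mapsto\zeta_n t$ (where $\zeta_n=e^{2\pi i/n}$) and is therefore a polynomial in $t^n$. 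The same argument with $m$ in place of $n$ settles Conjecture A$'$.

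For part (4) I would pass to $\FF_p$ and exploit the modular representation theory of $D_p=\ZZ/2\ltimes\ZZ/p$. Writing $\varrho\otimes\FF_p=\FF_p[\ZZ/p]=\FF_p[y]/(y-1)^p$, the $(y-1)$-adic filtration is $D_p$-invariant because $x(y-1)x^{-1}=-y^{-1}(y-1)$ differs from $(y-1)$ only by a unit; its successive quotients $Q_j$ are one-dimensional with $y$ acting trivially and $x$ acting as $(-1)^j$. Thus the composition factors of $\varrho\otimes\FF_p$ as an $\FF_p[D_p]$-module are the trivial character with multiplicity $\ell+1$ (even $j$) and the sign character with multiplicity $\ell$ (odd $j$). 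Multiplicativity of Wada's invariant over this composition series, combined with $\De_K^{\mathbb{1}\circ\al}(t)=\tfrac{\De_K(t)}{1-t}$ and $\De_K^{\mathrm{sgn}\circ\al}(t)=\tfrac{\De_K(-t)}{1+t}$ (the sign character sends the meridian to $-1$), gives Conjecture B(2), once one checks that reduction mod $p$ commutes with forming the invariant.

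Finally, for part (3) I would note that Conjecture A$'$ already forces $F(t)$ to be a polynomial in $t^2$, so B(1) is the assertion that $F$ factors as $f(t)f(-t)$ over $\ZZ$; evaluating at $t=0$ shows this requires $F(0)=f(0)^2$ to be a perfect square. It then suffices to exhibit one knot with a dihedral epimorphism for which a direct computation of $\De_K^{\varrho\circ\al}(t)$ produces an $F$ with $F(0)$ not a perfect square. I expect the main obstacle to lie not in the clean structural arguments above but in the bookkeeping surrounding them: proving that $F$ in parts (1)--(2) is genuinely an integer polynomial rather than merely a rational function, which requires a Galois-descent argument over the cyclotomic field of the characters together with the divisibility $\tfrac{\De_K(t)}{1-t}\mid\De_K^{\gamma\circ\al}(t)$, and carefully justifying multiplicativity of Wada's invariant over non-split short exact sequences and its compatibility with reduction mod $p$ in part (4).
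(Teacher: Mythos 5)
Your plan for parts (1), (2) and (4) has the same skeleton as the paper's proof: your decomposition $\ind_H^G$ of the trivial representation into the trivial summand plus the induced irreducibles $W_{\mathcal{O}}$ is literally the paper's splitting $\ga\cong\ga_0\oplus\ga_1\oplus\dots\oplus\ga_\ell$ with $V_i=\CC_{\chi_i}\oplus\CC_{t\chi_i}\oplus\dots\oplus\CC_{t^{n-1}\chi_i}$, and your $(y-1)$-adic filtration of $\FF_p[\ZZ/p]$ in part (4) is a cleaner, basis-free version of the paper's explicit flag spanned by $v_i=\sum_k k^i s^k$; both yield an upper-triangular form whose diagonal consists of the trivial character with multiplicity $\ell+1$ and the sign character with multiplicity $\ell$, after which Lemma \ref{lem:propstap} finishes. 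Part (4) of your proposal is essentially a complete and correct proof.

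There are, however, two genuine gaps. The first is part (3): this is an existence statement, and you never produce a knot. Your $t=0$ criterion is only a device for certifying a counterexample after one has been found and computed; moreover, as stated it needs repair, since Wada's invariant is defined only up to $\pm t^k$, so the correct necessary condition is that the lowest nonzero coefficient of $F$ be, up to sign, a perfect square. Because Conjecture B (1) is consistent with all known two-bridge computations, no structural argument will conjure the example; the paper takes $K=10_{164}$, writes down an explicit epimorphism $\al\co\pi_K\to D_3$ from a braid presentation, computes $\De_K^{\varrho\circ\al}(t)=\frac{\De_K(t)}{t-1}\,(3-13t^{2}+13t^{4}-3t^{6})$ by machine, and then rules out a factorization $f(t)f(-t)$ over $\ZZ$ because $t^2-3$ and $3t^2-1$ are irreducible and occur to the first power (equivalently, by your criterion, $|F(0)|=3$ is not a square). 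Without exhibiting such a knot, part (3) is simply not proved.

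The second gap is the step you flag and defer in parts (1)/(2). Your twisting argument ($W_{\mathcal{O}}\otimes\psi_1\cong W_{\mathcal{O}}$, hence $\De^{W_{\mathcal{O}}}(\zeta_n t)\doteq\De^{W_{\mathcal{O}}}(t)$) gives invariance only up to units $\pm t^k$, and it upgrades to ``$F$ is a polynomial in $t^n$'' only if one already knows that each $\De_K^{\ga_i\circ\al}(t)$ is a Laurent polynomial rather than a rational function. That polynomiality is real input, not bookkeeping: the paper obtains it from Proposition \ref{prop:taptm}, which rests on results of Kitano, Friedl--Vidussi and Boden--Friedl. Integrality of $F$ is then handled in the paper not by Galois descent but by an elementary argument: Wada's invariant is a quotient of integer polynomials with monic denominator, so $(1-t)\De_K^{\ga\circ\al}(t)$, once known to be a complex polynomial, is an integer polynomial, and Gauss's lemma together with $\De_K(1)=\pm1$ transfers integrality from $\De_K(t)F(t)$ to $F(t)$. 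Note that Galois descent could at best give rationality of $F$ and says nothing about polynomiality, so as proposed this load-bearing step of parts (1)/(2) is missing.
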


Although Conjecture B (1) fails for knots in general, it is entirely conceivable, in light of \cite{HM09b,HS12}, that
it holds for all 2-bridge knots.

After finishing this paper, the authors learned that Hirasawa and Murasugi have independently proved Conjecture A. Their proof will appear in a forthcoming revised version of \cite{HM09}.

\subsection*{Organization of the paper}

In Section \ref{section:twialex} we first recall the definition and basic properties of twisted Alexander polynomials.
A key tool in the proofs of these results is the classification of metabelian representations of knot groups
(see \cite{BF08}), which we recall in Section \ref{section:metab}.
In Section \ref{section:wada}, we answer Wada's question, and in Section \ref{section:hm}, we address the Hirasawa-Murasugi conjectures.

\bigskip \noindent
{\em Acknowledgments. }
The authors would like to thank Taehee Kim for helpful conversations and also
Alexander Stoimenow for providing braid descriptions of all prime knots up to twelve crossings \cite{St12}.
The first author is grateful to the Max Planck Institute for Mathematics for its support.

\section{Twisted Alexander polynomials}\label{section:twialex}

\subsection{Definition and basic properties}\label{section:deftap}
We quickly recall the definition of twisted Alexander polynomials along the lines of Wada's paper \cite{Wa94}.
Let $K$ be an oriented knot, let $\phi\co \pi_K\to \ZZ$ be the epimorphism which sends a meridian to 1 and let $\al \co \pi_K\to \Aut(V)$ be a representation where $V$ is a finite dimensional free module over a unique factorization domain (UFD) $R$ with quotient field $Q$.
Note that $\al$ and $\phi$ give rise to a tensor representation
\[ \begin{array}{rcl} \al \otimes \phi \co \pi_K &\to & \Aut(V\otimes_{R}\ZZ[t^{\pm 1}]) \\
g&\mapsto & \left(v\otimes f(t)\mapsto \al(g)(v)\otimes t^{\phi(g)}f(t)\right).\end{array} \]
The map $\al\otimes \phi$ naturally extends to a map $\ZZ[\pi_K] \to \Aut(V\otimes_R\ZZ[t^{\pm 1}])$.
If $A$ is a matrix over $\ZZ[\pi_K]$ then we denote by $(\al\otimes \phi)(A)$ the matrix which is given by applying
$\al\otimes \phi$ to each entry of $A$.

Now let
\[ \pi_K=\langle g_1,\dots,g_{k+1} \mid r_1,\dots,r_{k}\rangle\]
be a presentation of $\pi_K$ of deficiency one. We denote by $F_{k+1}$ the free group with generators $g_1,\dots,g_{k+1}$.
Given $j\in \{1,\dots,k+1\}$ we denote by  $\frac{\partial }{\partial g_j}\co \ZZ[F_{k+1}]\to \ZZ[F_{k+1}]$ the Fox derivative with respect to $g_j$, i.e. the unique $\ZZ$-linear map such that
\begin{eqnarray*}
\frac{\partial g_i}{\partial g_j}&=&\de_{ij},\\
\frac{\partial uv}{\partial g_j}&=&\frac{\partial u}{\partial g_j}+u\frac{\partial v}{\partial g_j}
\end{eqnarray*}
for all $i,j \in \{1,\dots,k+1\}$ and $u,v\in F_{k+1}$.
We now denote by
\[M:=\left(\frac{\partial r_i}{\partial g_j}\right)\]
the $k\times (k+1)$-matrix over $\ZZ[\pi_K]$ which is given by all the Fox derivatives of the relators.
Furthermore, given $i\in \{1,\dots,k+1\}$ we denote by $M_i$ the  $k\times k$-matrix which is given by deleting the $i$-th column of $M$.

Note that there exists at least one $i$ such that $\phi(g_i)\ne 0$. It follows that  $\det((\al\otimes \phi)(1-g_i))\ne 0$.
Wada \cite{Wa94} then defined the twisted Alexander polynomial of $(K,\al)$ as follows:
\[ \De_K^\al(t) :=\det((\al\otimes \phi)(M_i))\cdot \det((\al\otimes \phi)(1-g_i))^{-1}\in Q(t).\]

\begin{remark}
The twisted Alexander polynomial of $(K,\al)$ was first introduced by Lin \cite{Li01} using a slightly different definition.
Several alternative definitions and interpretations were given by Kitano \cite{Ki96} and Kirk-Livingston \cite{KL99}.
In particular  $\De_K^\al(t)$ can be viewed as the Reidemeister torsion of a twisted complex,
which makes it possible to prove many structure theorems using the general techniques of Reidemeister torsion.
We refer to \cite{Ki96,FV10} for details.
\end{remark}

Wada \cite{Wa94} proved the following lemma.

\begin{lemma}\label{lem:wadadefined}
Let $K$ be an oriented knot and  let $\al \co \pi_K\to \Aut(V)$ be a representation where $V$ is a finite dimensional free module over a UFD $R$.
Then  $\De_{K}^\al(t)$ is well-defined up to multiplication by a factor of the form $\pm t^kr$, where $k\in \ZZ$ and
$r\in \det(\al(\pi_K))$.
\end{lemma}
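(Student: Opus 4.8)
The plan is to isolate the two sources of indeterminacy in the definition of $\De_K^\al(t)$ and treat them separately: the choice of the deleted column $i$, and the choice of the deficiency-one presentation. Throughout I write $\Psi:=\al\otimes\phi$ for the induced ring homomorphism $\ZZ[\pi_K]\to\Aut(V\otimes_R\ZZ[t^{\pm1}])$, and I regard $\Psi(M)$ as an honest $kn\times(k+1)n$ matrix over the field $Q(t)$, where $n=\rk_R(V)$; the $n\times n$ blocks are then only bookkeeping, and $\det(\Psi(M_i))$ is an ordinary determinant over $Q(t)$.

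First I would establish independence of the choice of $i$. The key input is the fundamental identity of Fox calculus, $u-1=\sum_j \tfrac{\partial u}{\partial g_j}(g_j-1)$, applied to each relator $r_i$. Since $r_i=1$ in $\pi_K$ and $\Psi$ factors through $\ZZ[\pi_K]$, we have $\Psi(r_i)=\id$, hence $\sum_{j}\Psi\!\left(\tfrac{\partial r_i}{\partial g_j}\right)\Psi(g_j-1)=0$ for every $i$, i.e. $\Psi(M)\cdot B=0$, where $B$ is the $(k+1)n\times n$ matrix whose $j$-th block is $\Psi(g_j-1)$. The relation $\Psi(M)\,B=0$ forces a Cramer-type identity
\[ \det(\Psi(M_i))\cdot\det(\Psi(g_l-1))=\pm\,\det(\Psi(M_l))\cdot\det(\Psi(g_i-1))\]
for all $i,l$. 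For $n=1$ this is exactly the statement that the vector of signed maximal minors spans the one-dimensional kernel of $\Psi(M)$, so that $B$ is proportional to it; the block case rests on the same principle for the acyclic twisted complex. Since $\det(\Psi(1-g_i))=(-1)^n\det(\Psi(g_i-1))$, this shows that $\det(\Psi(M_i))\det(\Psi(1-g_i))^{-1}$ is independent of $i$ up to sign.

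Next I would prove invariance under change of presentation, using that any two deficiency-one presentations of $\pi_K$ are connected by Tietze transformations and computing the effect of each elementary move on $\Psi(M)$ by the product and inverse rules for Fox derivatives (again invoking $\Psi(r_i)=\id$). Replacing $r_i$ by $r_ir_j$ adds block row $j$ to block row $i$ of $\Psi(M)$, an elementary operation leaving the relevant determinants unchanged. Replacing $r_i$ by a conjugate $wr_iw^{-1}$ multiplies block row $i$ on the left by $\Psi(w)=t^{\phi(w)}\al(w)$, hence multiplies $\det(\Psi(M_i))$ by $\det\Psi(w)=t^{\,n\phi(w)}\det\al(w)$; as $\det\al(w)\in\det(\al(\pi_K))$, this is exactly a factor of the permitted form $\pm t^kr$. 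Finally, introducing a new generator $g_{k+2}$ together with a relator $g_{k+2}w^{-1}$ enlarges $\Psi(M)$ by one block row and column whose new diagonal block is $\pm\id$ and which is zero on the old relators, so expanding the determinant along this block reproduces $\det(\Psi(M_i))$ up to sign while leaving the denominator $\det(\Psi(1-g_i))$ unchanged. Combining these, any two presentations yield the same invariant up to $\pm t^kr$ with $r\in\det(\al(\pi_K))$.

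The main obstacle is the block ($n>1$) form of the Cramer identity in the first step: because the matrix entries are themselves $n\times n$ matrices, the one-line kernel argument valid for $n=1$ must be upgraded to a genuine determinant identity over $Q(t)$ for the deletion of an entire block column. The cleanest way around this is to recognize $\det(\Psi(M_i))\det(\Psi(1-g_i))^{-1}$, for the various $i$, as computing the Reidemeister torsion of one and the same acyclic complex $C_*(X_K;V\otimes_R\ZZ[t^{\pm1}])$ with respect to different choices, and to invoke the standard invariance of torsion; alternatively one verifies the determinant identity directly by a Laplace/Cauchy--Binet expansion. A secondary point requiring care is that the Tietze reduction between two deficiency-one presentations be arranged through the elementary moves analyzed above, each contributing only a factor of the permitted type.
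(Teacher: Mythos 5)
The paper gives no proof of this lemma at all --- it is quoted from Wada \cite{Wa94} --- so your proposal has to be measured against Wada's original argument, whose two-step architecture (independence of the deleted column via the fundamental formula of Fox calculus, then invariance under elementary moves on presentations, with the conjugation move $r_i\mapsto wr_iw^{-1}$ producing exactly the permitted factor $\pm t^{n\phi(w)}\det\al(w)$) your outline reproduces. Your first step is sound, and the ``main obstacle'' you worry about there is not actually an obstacle: the Cramer-type identity holds for every block size $n$ by an elementary manipulation. Writing $\Psi=\al\otimes\phi$ and letting $c_1,\dots,c_{k+1}$ denote the block columns of $\Psi(M)$, right-multiply the column $c_l$ of $\Psi(M_i)$ by $\Psi(g_l-1)$ (this scales the determinant by $\det\Psi(g_l-1)$), then add to it the columns $c_j\Psi(g_j-1)$ for $j\ne i,l$ (each such addition is right multiplication by a block-unitriangular matrix, hence determinant-preserving); the fundamental formula $\sum_j\Psi\bigl(\tfrac{\partial r_m}{\partial g_j}\bigr)\Psi(g_j-1)=\Psi(r_m-1)=0$ converts the resulting column into $-c_i\Psi(g_i-1)$, and factoring out $\Psi(g_i-1)$ and permuting block columns yields $\det\Psi(M_i)\det\Psi(g_l-1)=\pm\det\Psi(M_l)\det\Psi(g_i-1)$. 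No Cauchy--Binet expansion and no appeal to torsion is needed here.

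The genuine gap is in your second step. Tietze's theorem connects two finite presentations of $\pi_K$ only through moves that include adding and deleting a \emph{redundant relator}, and such moves destroy the deficiency-one, square-matrix setting in which Wada's invariant is defined; by contrast, every move you analyze (relator products, conjugation, stabilization by a new generator with defining relator) preserves deficiency one. The assertion that any two deficiency-one presentations are connected by your restricted list of moves is not Tietze's theorem: it is an Andrews--Curtis-type statement, open in general, and nothing in your sketch addresses it. So the point you set aside at the end as ``secondary'' is exactly where the content of the lemma lies. Wada's argument has to confront presentations with excess relators, where the twisted Alexander matrix is no longer square and one works with greatest common divisors of maximal minors over $R[t^{\pm 1}]$; this is precisely where the hypothesis that $R$ is a UFD enters --- a hypothesis your proof never uses, which is itself a warning sign. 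The torsion route you mention could in principle also close the gap, but invariance of torsion alone does not suffice: one needs the identification of Wada's invariant with the twisted Reidemeister torsion of $X_K$ (\cite{Ki96}, \cite{KL99}) \emph{for the given presentation}, which again requires relating the presentation 2-complex of an arbitrary deficiency-one presentation to $X_K$ up to simple homotopy, and that is not automatic.
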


In the following we write $\De_K^\al(t) \doteq f(t)$ if $\De_K^\al(t)$ and $f(t)$ agree up to the indeterminacy of
$\De_K^\al(t)$.

We now collect several well-known results about twisted Alexander polynomials.
Most of  the subsequent statements are immediate consequences of the definition and basic properties of determinants.
We  refer to \cite{Wa94,FV10} for details.

\begin{lemma}\label{lem:propstap}
Let $K$ be an oriented knot with classical Alexander polynomial $\De_K(t)$.
\begin{enumerate}
\item If $\ep \co \pi_K\to \GL(1,\ZZ)$ is the trivial representation, then
\[\De_K^\ep(t)\doteq \frac{\De_K(t)}{1-t}.\]
\item If $\tau \co \pi_K\to \GL(1,\CC)$ is a representation which is given by sending the oriented meridian to $z\in \CC\sm \{0\}$,
then
\[\De_K^\tau(t)\doteq \frac{\De_K(tz)}{1-tz}.\]
\item If $\al \co \pi_K\to \Aut_R(V)$ and $\be\co \pi_K\to \Aut_R(W)$ are isomorphic representations, i.e.
if there exists an isomorphism $\Phi \co V\to W$ such that $\al=\Phi^{-1}\circ \be\circ \Phi$, then
\[ \De_{K}^\al(t) \doteq \De_{K}^\be(t).\]
\item If  $\al \co \pi_K\to \GL(n,R)$, $R$ a UFD, is a representation of the form
\[ \al(g)=\begin{bmatrix} \be(g) & \ga(g) \\ 0& \de(g)\end{bmatrix}, \]
then
\[ \De_{K}^\al(t) \doteq \De_K^\be (t) \cdot \De_K^\de(t).\]
\item Let $\al \co \pi_K\to \Aut_{\ZZ}(W)$ be a representation
with $W$ a finitely generated free $\ZZ$-module and let $p$ be a prime. We denote by $\al_p \co \pi_K\to \Aut_{\FF_p}(W\otimes_{\ZZ}\FF_p)$  the  `mod $p$' reduction of $\al$. Then
\[ \De_K^{\al_p}(t)\equiv \De_K^{\al}(t) \modd p.\]
\end{enumerate}
\end{lemma}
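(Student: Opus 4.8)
The plan is to fix, once and for all, a deficiency-one presentation $\pi_K=\langle g_1,\dots,g_{k+1}\mid r_1,\dots,r_k\rangle$ together with an index $i$ satisfying $\phi(g_i)\neq 0$, and to compute every invariant in the statement from this single piece of Fox--Jacobian data, so that all five assertions become comparisons of the determinants $\det((\al\otimes\phi)(M_i))$ and $\det((\al\otimes\phi)(1-g_i))$. I would begin with (3) and (4), which are pure linear algebra. For (3), if $\Phi\co V\to W$ satisfies $\al=\Phi^{-1}\be\Phi$, then $\Phi$ conjugates $\al\otimes\phi$ to $\be\otimes\phi$ entry-by-entry, so $(\al\otimes\phi)(A)$ and $(\be\otimes\phi)(A)$ are conjugate for every matrix $A$ over $\ZZ[\pi_K]$; applying this to $A=M_i$ and to $A=1-g_i$ and taking determinants yields $\De_K^\al\doteq\De_K^\be$. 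For (4), I would use that the block upper triangular matrices with the given block sizes form a subalgebra, and that $\al\otimes\phi$ lands in it with diagonal blocks $\be\otimes\phi$ and $\de\otimes\phi$; hence $(\al\otimes\phi)(M_i)$, after conjugation by the single permutation matrix that groups the $\be$-coordinates before the $\de$-coordinates (a determinant-preserving operation), becomes block upper triangular with diagonal blocks $(\be\otimes\phi)(M_i)$ and $(\de\otimes\phi)(M_i)$. The determinant then factors, and the identical computation for $1-g_i$ gives (4).

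For (1), observe that $\ep\otimes\phi$ is simply the ring map $g\mapsto t^{\phi(g)}$, so $(\ep\otimes\phi)(M_i)$ is the classical Alexander matrix with its $i$-th column deleted and $(\ep\otimes\phi)(1-g_i)=1-t^{\phi(g_i)}$. I would then invoke the standard Fox-calculus computation of the Alexander polynomial, namely the identity $\det((\ep\otimes\phi)(M_i))\doteq \De_K(t)\,(1-t^{\phi(g_i)})/(1-t)$, which follows from the fundamental formula $\sum_j\frac{\partial r}{\partial g_j}(g_j-1)=r-1$ after abelianizing; dividing by $1-t^{\phi(g_i)}$ gives $\De_K^\ep\doteq\De_K(t)/(1-t)$. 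Part (2) then follows formally: since $\tau$ takes values in the abelian group $\CC^\times$, it factors through $\pi_K^{ab}\cong\ZZ$, so $\tau(g)=z^{\phi(g)}$ and $(\tau\otimes\phi)(g)=(tz)^{\phi(g)}$. Thus $\tau\otimes\phi$ is obtained from $\ep\otimes\phi$ by the substitution $t\mapsto tz$, and substituting into (1) gives $\De_K^\tau(t)\doteq\De_K(tz)/(1-tz)$.

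For (5) I would write $\De_K^\al(t)=N(t)/D(t)$ with $N(t)=\det((\al\otimes\phi)(M_i))\in\ZZ[t^{\pm1}]$ and $D(t)=\det(I-t^{\phi(g_i)}\al(g_i))\in\ZZ[t^{\pm1}]$, the latter having constant term $\det(I)=1$. Since the determinant is a polynomial in the matrix entries with integer coefficients, reduction modulo $p$ commutes with it, so the reductions $\overline N$ and $\overline D$ are precisely the numerator and denominator computed for $\al_p$, and $\overline D$ still has unit constant term; hence $\De_K^{\al_p}=\overline N/\overline D$ is the mod-$p$ reduction of $\De_K^\al$, giving (5). I expect the only genuinely non-formal input to be the classical identification used in (1); the two points that require a little care are the single-permutation bookkeeping that block-triangularizes $(\al\otimes\phi)(M_i)$ in (4), and, in (5), the observation that $D(t)$ has a unit constant term, which is exactly what makes the reduction of the quotient $N/D$ modulo $p$ well-defined rather than merely formal.
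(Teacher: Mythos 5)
Your proof is correct, and it takes exactly the route the paper intends: the paper gives no proof of this lemma, stating only that the claims are ``immediate consequences of the definition and basic properties of determinants'' (citing Wada and Friedl--Vidussi), and your argument fills in precisely those details --- conjugation-invariance of determinants for (3), block-triangular determinant factorization for (4), entrywise compatibility of determinants with reduction mod $p$ for (5), and the classical Fox-calculus identity $\det((\ep\otimes\phi)(M_i))\doteq \De_K(t)(1-t^{\phi(g_i)})/(1-t)$ for (1), with (2) obtained from (1) by the substitution $t\mapsto tz$ since any $\GL(1,\CC)$ representation factors through $H_1(X_K;\ZZ)\cong\ZZ$. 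You also correctly flag the two genuinely non-formal points (the classical identification in (1), and the unit constant term of the denominator in (5), which makes the mod-$p$ statement meaningful), so nothing further is needed.
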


\subsection{Satellite knots}

Let $K\subset S^3$ be an  oriented knot and let
$C\subset S^3 $ be an oriented knot. Let $A\subset X_K$ be a simple closed
curve, unknotted in $S^3$. Then $S^3\sm \nu A$ is a solid torus. Let
$\phi\co \partial(\overline{\nu A})\to
\partial(\overline{\nu C})$ be a diffeomorphism which sends a meridian
of $A$ to a longitude of $C$, and a longitude of $A$ to a meridian of
$C$. The space
\[
\left({S^3\sm \nu A}\right) \cup_{\phi} \left({S^3\sm \nu C}\right)
\]
is diffeomorphic to $S^3$. The image of $K$ is denoted by
$S=S(K,C,A)$. We say $S$ is the \emph{satellite knot with companion
  $C$, orbit $K$ and axis $A$}. Put differently, $S$ is the result of
replacing a tubular neighborhood of $C$ by an oriented knot  in a solid torus,
namely by $K\subset {S^3\sm \nu A}$. Note that $S$ inherits an
orientation from $K$.

 The abelianization map $\pi_1(S^3\sm \nu C)\to \ZZ$ gives rise to
a degree one map from $S^3\sm \nu C$ to $\overline{\nu A}$ which is a
diffeomorphism on the boundary.  In particular we get an induced map
\[
X_S = \left({S^3\sm \nu A \sm \nu K}\right) \cup_{\phi} \left({S^3\sm
    \nu C}\right) \to \left({S^3\sm \nu A \sm \nu K}\right) \cup
\overline{\nu A}= X_K
\]
which we denote by $f$. Note that $f$ is a diffeomorphism on the
boundary and that $f$ induces an isomorphism of homology groups.
Also note that the curve $A$ determines an
element $[A]\in \pi_K$ which is well--defined up to conjugation.

Given a group $\pi$  we denote by  $\pi^{(n)}$  the $n$--th term of the
derived series of $\pi$. These subgroups are defined inductively by setting
$\pi^{(0)}=\pi$ and $\pi^{(i+1)}=[\pi^{(i)},\pi^{(i)}]$.
The following lemma is well-known and follows from a standard Seifert-van Kampen argument.

\begin{lemma}\label{lem:samealexandermodule}
 Let $K,C,A, S=S(K,C,A)$ and $f\co X_S\to X_K$ as
  above. If $[A]$ lies in $\pi_K^{(n)}$, then $f$ induces an isomorphism
  \[ \pi_S/\pi_S^{(n+1)} \xrightarrow{\cong } \pi_K/\pi_K^{(n+1)}.\]
\end{lemma}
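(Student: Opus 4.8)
The plan is to compute both groups via the Seifert--van Kampen theorem applied to the two decompositions in the statement, to identify the homomorphism $f_*$ explicitly, and then to run an induction on $n$. Write $Y = S^3 \sm \nu A \sm \nu K$, let $V = \overline{\nu A}$ be the solid torus, let $T = \partial \overline{\nu A}$ be the gluing torus, and set $X_C = S^3\sm \nu C$. Van Kampen applied to $X_K = Y\cup_T V$ and to $X_S = Y \cup_\phi X_C$ presents both groups as pushouts
\[ \pi_K = \pi_1(Y) *_{\pi_1(T)} \pi_1(V), \qquad \pi_S = \pi_1(Y) *_{\pi_1(T)} \pi_1(X_C),\]
where $\pi_1(V)\cong \ZZ$ is generated by the core $[A]$ and $\pi_1(T)\cong\ZZ^2$ is generated by the meridian and longitude of $A$. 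Under $\phi$ the meridian of $A$ becomes the longitude $\lambda$ of $C$ while the longitude of $A$ becomes the meridian $\mu$ of $C$; inside $\pi_1(V)$ the meridian of $A$ dies and the longitude of $A$ becomes the core $[A]$. I would then check that $f_*$ is the identity on $\pi_1(Y)$ and is the abelianization $\pi_1(X_C)\to \ZZ=\pi_1(V)$ on the companion factor (so $\mu\mapsto[A]$ and $\lambda\mapsto 0$), and that these match over $\pi_1(T)$. Consequently $f_*$ is surjective, and by the universal property of the pushout its kernel $N$ is the normal closure in $\pi_S$ of $\ker(\pi_1(X_C)\to\ZZ)=\pi_1(X_C)^{(1)}$.

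Since $f_*$ is onto, it carries $\pi_S^{(n+1)}$ onto $\pi_K^{(n+1)}$, so the induced map $\pi_S/\pi_S^{(n+1)}\to\pi_K/\pi_K^{(n+1)}$ is always surjective, and it is injective exactly when $N\subseteq\pi_S^{(n+1)}$. Thus I would prove, by induction on $n$, the statement $S(n)$: if $[A]\in\pi_K^{(n)}$ then $N\subseteq \pi_S^{(n+1)}$ (equivalently, $f_*$ induces the asserted isomorphism). The base case $S(0)$ is simply that $f$ is a homology isomorphism, which is given. For the inductive step, observe that $[A]\in\pi_K^{(n)}\subseteq\pi_K^{(n-1)}$, so $S(n-1)$ applies and yields the isomorphism
\[ f_*\co \pi_S/\pi_S^{(n)} \xrightarrow{\ \cong\ } \pi_K/\pi_K^{(n)},\]
and in particular $N\subseteq\pi_S^{(n)}$, hence $\pi_1(X_C)^{(1)}\subseteq\pi_S^{(n)}$.

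The crucial extra input from the stronger hypothesis is obtained by pulling it back through this level-$n$ isomorphism. Since $f_*(\mu)=[A]$ and $[A]\in\pi_K^{(n)}$, the class of $\mu$ is sent to $0$ in $\pi_K/\pi_K^{(n)}$, so injectivity forces $\mu\in\pi_S^{(n)}$. Because $\pi_1(X_C)/\pi_1(X_C)^{(1)}\cong\ZZ$ is generated by $\mu$, every element of $\pi_1(X_C)$ is a power of $\mu$ times an element of $\pi_1(X_C)^{(1)}$; combined with $\pi_1(X_C)^{(1)}\subseteq\pi_S^{(n)}$ this gives the key containment $\pi_1(X_C)\subseteq\pi_S^{(n)}$. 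Taking commutators then yields
\[ \pi_1(X_C)^{(1)}=[\pi_1(X_C),\pi_1(X_C)]\subseteq[\pi_S^{(n)},\pi_S^{(n)}]=\pi_S^{(n+1)},\]
and since $\pi_S^{(n+1)}$ is normal we conclude $N\subseteq\pi_S^{(n+1)}$, completing the induction.

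I expect the main obstacle to be the bookkeeping in the first paragraph: correctly tracking the meridian--longitude interchange under $\phi$, verifying that $f_*$ really is the abelianization on the companion factor and is compatible over $\pi_1(T)$, and pinning down $N$ as the normal closure of $\pi_1(X_C)^{(1)}$. The inductive heart is short by comparison; its one genuinely clever point is using the level-$n$ isomorphism to convert the hypothesis $[A]\in\pi_K^{(n)}$ into the intrinsic statement $\mu\in\pi_S^{(n)}$, which is precisely what lets the entire companion subgroup, and therefore its commutator subgroup, drop one level deeper in the derived series.
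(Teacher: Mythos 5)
Your proof is correct, and it is precisely the ``standard Seifert--van Kampen argument'' that the paper invokes without giving details: the paper offers no proof of this lemma beyond that one-line attribution. Your identification of $\ker f_*$ as the normal closure of the image of $\pi_1(X_C)^{(1)}$, and the induction that converts $[A]\in\pi_K^{(n)}$ into $\mu\in\pi_S^{(n)}$ so that the companion subgroup drops a level in the derived series, are exactly the right way to carry that argument out.
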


The next lemma is proved by using the reinterpretation of Wada's twisted Alexander polynomial as twisted Reidemeister torsion
(see \cite{Ki96,FV10}) and standard `Mayer-Vietoris-style' arguments. We refer to \cite[Lemma~7.1]{CF10} for a proof of a more general statement.

\begin{lemma} \label{lemmahofsat} Let $K,C,A, S=S(K,C,A)$ and $f$ as
  above. We suppose that $A$ is null-homologous in $X_K$.
    Let $\al\co \pi_K\to
  \GL(k,\CC)$ be a representation. We denote the representation
  $\pi_S\xrightarrow{f}\pi_K\xrightarrow{\al} \GL(k,\CC)$ by $\al$ as
  well.  Denote by $z_1,\dots,z_k$ the eigenvalues of
  $\al(A)$ and let $\De_C(t)\in \ZZ[t^{\pm 1}]$ be a fixed representative of the
  Alexander polynomial of $C$.  Then the following holds:
  \[ \De_S^{\al}(t)\doteq \De^{\al}_K(t)\cdot
    \prod_{i=1}^k \De_C(z_i) \in \CC(t).\]
\end{lemma}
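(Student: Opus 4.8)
The plan is to pass from Wada's invariant to Reidemeister torsion and then to exploit the multiplicativity of torsion under the two natural splittings of $X_S$ and $X_K$ along the torus $T:=\partial(\overline{\nu A})$. By the reinterpretation recalled in the Remark in Section~\ref{section:deftap} (see \cite{Ki96,FV10}), for any oriented knot $J$, representation $\rho$, and abelianization $\phi$, one has $\De_J^{\rho}(t)\doteq \tau(X_J;\rho\otimes\phi)$, the Reidemeister torsion of the $\CC(t)$-twisted chain complex of $X_J$, which I will denote $\tau$. I would carry out the whole argument in the language of $\tau$, using the two splittings
\[ X_S=\left(S^3\sm\nu A\sm\nu K\right)\cup_T\left(S^3\sm\nu C\right)\quad\text{and}\quad X_K=\left(S^3\sm\nu A\sm\nu K\right)\cup_T\overline{\nu A};\]
both are glued along $T$ and share the common piece $Y:=S^3\sm\nu A\sm\nu K$.

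The key preliminary is to pin down the coefficient systems induced on each piece. Since $f$ is the identity on $Y$, the pair $(\al,\phi_S)$ restricts on $Y$ to the pair $(\al,\phi)$ coming from $X_K$. On the companion piece $S^3\sm\nu C$ the map $f$ factors through the abelianization $\pi_1(S^3\sm\nu C)\to\ZZ=\langle[A]\rangle$, sending the meridian $\mu_C$ to $[A]$; hence $\al\circ f$ there factors through this abelianization with $\mu_C\mapsto\al(A)$. The null-homology hypothesis enters decisively at this point: as $[A]$ is null-homologous in $X_K$ we have $\phi([A])=0$, so $\phi_S$ vanishes on $\pi_1(S^3\sm\nu C)$, on $\pi_1(\overline{\nu A})$, and on $\pi_1(T)$. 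Consequently the coefficients on these three pieces are the constant, $t$-free matrices determined by $[A]\mapsto\al(A)$, together with $\mu_A\mapsto I$ since $\mu_A$ bounds a disk in $\overline{\nu A}$.

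Applying multiplicativity of torsion to each splitting gives $\tau(X_S)\cdot\tau(T)\doteq\tau(Y)\cdot\tau(S^3\sm\nu C)$ and $\tau(X_K)\cdot\tau(T)\doteq\tau(Y)\cdot\tau(\overline{\nu A})$, where $T$ carries the same ($t$-free) coefficients in both. Taking the ratio eliminates the pieces I do not wish to compute:
\[ \frac{\tau(X_S)}{\tau(X_K)}\doteq\frac{\tau(S^3\sm\nu C)}{\tau(\overline{\nu A})}.\]
Now I would evaluate the two surviving pieces. The solid torus $\overline{\nu A}\simeq S^1$ has coefficients $[A]\mapsto\al(A)$, so $\tau(\overline{\nu A})\doteq\det(I-\al(A))^{-1}=\prod_{i=1}^k(1-z_i)^{-1}$. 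For $S^3\sm\nu C$ the coefficients factor through $\mu_C\mapsto\al(A)$; conjugating $\al(A)$ into upper-triangular form by Lemma~\ref{lem:propstap}(3) and applying the block-triangular identity Lemma~\ref{lem:propstap}(4) reduces the torsion to a product over the eigenvalues, each $1$-dimensional factor $\mu_C\mapsto z_i$ contributing the classical value $\De_C(z_i)/(1-z_i)$. Thus $\tau(S^3\sm\nu C)\doteq\prod_{i=1}^k\De_C(z_i)/(1-z_i)$, and upon dividing the $(1-z_i)^{-1}$ factors cancel to leave $\tau(X_S)/\tau(X_K)\doteq\prod_{i=1}^k\De_C(z_i)$, which is precisely $\De_S^{\al}(t)\doteq\De_K^{\al}(t)\cdot\prod_{i=1}^k\De_C(z_i)$.

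The main obstacle is the torsion bookkeeping behind the multiplicativity step, since torsions are defined only when the relevant twisted homology vanishes. The torus $T$, with coefficients $\mu_A\mapsto I$ and $[A]\mapsto\al(A)$, has acyclic twisted homology exactly when $\al(A)$ has no eigenvalue equal to $1$; if some $z_i=1$ the naive torsions of $T$, $\overline{\nu A}$, and $S^3\sm\nu C$ all degenerate and the clean ratio argument collapses. In the generic case the multiplicativity is straightforward, and the degenerate case is covered by the general Mayer-Vietoris formalism for twisted torsion of \cite[Lemma~7.1]{CF10}; alternatively one can argue by specialization and continuity, observing that $\De_C(1)=\pm1$ so that the would-be extra factors are units and cannot disturb the $\doteq$-equality. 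I would accordingly structure the write-up around the generic computation and invoke \cite[Lemma~7.1]{CF10} to absorb the degenerate eigenvalues.
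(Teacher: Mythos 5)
Your proposal is correct and takes essentially the same approach as the paper: the paper proves this lemma precisely by reinterpreting Wada's invariant as twisted Reidemeister torsion and applying standard Mayer--Vietoris-style multiplicativity along the splitting torus, deferring the details to the more general \cite[Lemma~7.1]{CF10}. Your write-up simply carries out explicitly the torsion computations for the solid torus and companion pieces, and, like the paper, you invoke \cite[Lemma~7.1]{CF10} to handle the degenerate (non-acyclic) case where $\al(A)$ has eigenvalue $1$.
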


\section{Metabelian representations of knot groups}\label{section:metab}

A  representation $\al$ of a group $\pi$
is called \emph{metabelian}
 if $\al$ factors through $\pi/\pi^{(2)}$.
In this section we recall some results from \cite{Fr04,BF08, BF11, BF12}
regarding  metabelian  representations of knot groups.

\subsection{Classification of irreducible metabelian representations of knot groups}

Let $K\subset S^3$ be a knot. We write $\pi=\pi_K$ and we denote by $H=H_1(X_K;\ZZ[t^{\pm 1}])$ its Alexander module.
It is well-known that $\pi/\pi^{(2)}$ is isomorphic to $\ZZ\ltimes H$, where $n\in \ZZ$ acts on $H$ by multiplication by $t^n$.
(See e.g. \cite{Fr04} for a proof.)
Let
$\chi \co H\to \CC^*$ be a character which factors through $H/(t^n-1)$ and $z\in U(1)$. Then it follows from
\cite[Section~3]{BF08} that, for $(j, h) \in \ZZ\ltimes H,$ setting
$$  \al_{(n,\chi,z)} (j,h) =
 \begin{bmatrix}
 0& &\dots &z \\
 z&0&\dots &0 \\
\vdots &\ddots &\ddots&\vdots \\
     0&\dots &z &0 \end{bmatrix}^j
     \begin{bmatrix} \chi(h) &0&\dots &0 \\
 0&\chi(th) &\dots &0 \\
\vdots &&\ddots &\vdots \\ 0&0&\dots &\chi(t^{n-1}h) \end{bmatrix}
$$
defines a  $\GL(n,\CC)$ representation.
Note that $\al_{(n,\chi,z)}$ factors through $\ZZ\ltimes H/(t^n-1)$.
We denote by $\al_{(n,\chi,z)}$ also the induced representation of $\pi$ obtained by precomposing
$\al_{(n,\chi,z)}$ with the epimorphism $\pi\to \pi/\pi^{(2)}\cong \ZZ\ltimes H$.

Now suppose  $z\in \CC$ satisfies $z^n=(-1)^{n+1}$ and set $\al_{(n,\chi)}=\al_{(n,\chi,z)}$.
Note that $\al_{(n,\chi)}$ defines a representation
$\al_{(n,\chi)}\co \pi_K\to \SL(n,\CC)$ and that  the isomorphism type of this representation is independent of the choice of $z$.

In \cite{BF08}, extending earlier work in \cite{Fr04}, we classified irreducible metabelian $\SL(n,\CC)$ representations of knot groups.
In particular we proved that given  any irreducible representation  $\al \co\ZZ\ltimes H\to \SL(n,\CC)$
 there exists a character $\chi \co H\to \CC^*$ which factors through $H/(t^n-1)$ such that
$\al$ is isomorphic to $\al_{(n,\chi)}$.

We conclude this section with the following proposition,
which is an immediate consequence of \cite{Ki96},  \cite[Proposition~1]{FV10} and  \cite[Proposition~5]{BF11}.

\begin{prop} \label{prop:taptm}
 Let $K$ be a knot and $\al\co \pi \to \SL(n,\CC)$ an irreducible  metabelian representation
 with $n>1$. Then $\De_K^\al(t)$ lies in $\CC[t^{\pm 1}]$ and it is a polynomial in $t^n$.
\end{prop}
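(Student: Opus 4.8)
The statement bundles together two assertions, and the plan is to establish them by separate means: first that $\De_K^\al(t)$ is an honest element of $\CC[t^{\pm1}]$ rather than a proper rational function, and then that, once this is known, it actually lies in $\CC[t^{\pm n}]$.

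For the first assertion I would pass to the homological interpretation of Wada's invariant. By Kitano \cite{Ki96}, $\De_K^\al(t)$ agrees up to the usual indeterminacy with the Reidemeister torsion of $X_K$ twisted by $\al\otimes\phi$, and as such it is the quotient $\De_{K,1}^\al(t)/\De_{K,0}^\al(t)$ of the orders of the twisted homology modules $H_i(X_K;\CC[t^{\pm1}]^n)$. Because $\al$ is irreducible with $n>1$ it is in particular nontrivial, and \cite[Proposition~1]{FV10} then ensures that this quotient lies in $\CC[t^{\pm1}]$: concretely, irreducibility forces the factor $\De_{K,0}^\al(t)\doteq\det((\al\otimes\phi)(1-g_i))$ coming from $H_0$ to divide $\De_{K,1}^\al(t)\doteq\det((\al\otimes\phi)(M_i))$. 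This divisibility is the step I expect to be the main obstacle, since it is exactly here that the hypotheses ``irreducible'' and ``$n>1$'' are used: for reducible or one-dimensional $\al$ the zeroth twisted homology typically obstructs polynomiality, as the trivial representation in Lemma~\ref{lem:propstap}(1) already illustrates, where $\De_K(t)/(1-t)$ fails to be a polynomial.

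For the second assertion I would argue directly from the explicit matrices. By the classification of \cite{BF08} recalled above, combined with Lemma~\ref{lem:propstap}(3), I may assume $\al=\al_{(n,\chi,z)}$. Writing $g\in\pi_K$ for an element mapping to $(j,h)\in\ZZ\ltimes H$ with $j=\phi(g)$, the explicit form gives $(\al\otimes\phi)(g)=t^{j}P^{j}D(h)=(tP)^{j}D(h)$, where $P$ is the scaled cyclic permutation matrix and $D(h)$ is diagonal with entries in $\CC$. Since $tP$ is a cyclic permutation matrix with entries $zt$ and $D(h)$ moves no entries, every nonzero entry of $(\al\otimes\phi)(g)$ in matrix position $(a,b)$ is a monomial in $t$ whose exponent is congruent to $a-b$ modulo $n$. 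This congruence is visibly preserved under sums and products of such matrices, hence holds for every Fox derivative and for $1-g_i$. Arranging $(\al\otimes\phi)(M_i)$ into $k\times k$ blocks of size $n$ and indexing rows and columns by pairs $(I,a)$, each term of the Leibniz expansion of the determinant has total $t$-exponent congruent modulo $n$ to $\sum_{(I,a)}a-\sum_{(I,a)}b_{\sigma(I,a)}=0$, since $\sigma$ merely permutes the index set. Hence $\det((\al\otimes\phi)(M_i))\in\CC[t^{\pm n}]$, and the same computation with a single block gives $\det((\al\otimes\phi)(1-g_i))\in\CC[t^{\pm n}]$; this $t^{n}$-periodicity is the content of \cite[Proposition~5]{BF11}.

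It remains to combine the two steps. The third paragraph shows that the representative of $\De_K^\al(t)$ furnished by Wada's definition is a ratio of two elements of $\CC[t^{\pm n}]$, hence lies in $\CC(t^n)$ and is fixed by the substitution $t\mapsto\zeta_n t$ for a primitive $n$-th root of unity $\zeta_n$. The second paragraph shows it also lies in $\CC[t^{\pm1}]$, and a Laurent polynomial fixed by $t\mapsto\zeta_n t$ can only involve powers of $t$ divisible by $n$; therefore $\De_K^\al(t)\in\CC[t^{\pm n}]$, which is precisely the assertion that it is a polynomial in $t^n$.
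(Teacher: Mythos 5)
Your proposal is correct and follows essentially the same route as the paper: the paper's entire proof consists of citing Kitano's torsion interpretation \cite{Ki96}, the polynomiality criterion of \cite[Proposition~1]{FV10}, and the $t^n$-periodicity statement of \cite[Proposition~5]{BF11}, which are exactly the three ingredients you assemble (with the hypotheses ``irreducible'' and ``$n>1$'' entering precisely where you say they do). The only difference is that you additionally supply a self-contained determinant/congruence proof of the $t^n$-periodicity and spell out how the pieces combine, steps the paper leaves implicit as ``immediate.''
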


Note that this proposition was also proved by Herald, Kirk, and  Livingston \cite[p.~935]{HKL10}.

\subsection{Tensor products}

In the next section we will consider tensor products of  metabelian representations.
We will make use of the following proposition.

  \begin{prop} \label{prop:tensor}
Let $K\subset S^3$ be a knot and  let $H=H_1(X_K;\ZZ[t^{\pm 1}])$ be its Alexander module.
Let
$\chi_i \co H\to \CC^*, i=1,2$ be  characters which factor through $H/(t^{k_i}-1), i=1,2$.
If $k_1,k_2$ are coprime, then
\[ \al_{(k_1,\chi_1)} \otimes \al_{(k_2,\chi_2)}
\cong  \al_{(k_1k_2,\chi_1 \chi_2)}. \]
\end{prop}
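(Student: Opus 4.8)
The plan is to write down an explicit change of basis, governed by the Chinese Remainder Theorem, that intertwines the two representations. Set $\al_{(k_1,\chi_1)}=\al_{(k_1,\chi_1,z_1)}$ on $V_1=\CC^{k_1}$ with standard basis $e_0,\dots,e_{k_1-1}$ and $\al_{(k_2,\chi_2)}=\al_{(k_2,\chi_2,z_2)}$ on $V_2=\CC^{k_2}$ with basis $f_0,\dots,f_{k_2-1}$, where $z_1^{k_1}=(-1)^{k_1+1}$ and $z_2^{k_2}=(-1)^{k_2+1}$. Reading off the defining matrices, the generator $\mu=(1,0)\in\ZZ\ltimes H$ acts by the cyclic shifts $\mu\cdot e_i=z_1 e_{i+1}$ and $\mu\cdot f_j=z_2 f_{j+1}$ (indices taken mod $k_1$ and mod $k_2$), while each $h\in H$ acts diagonally by $h\cdot e_i=\chi_1(t^i h)e_i$ and $h\cdot f_j=\chi_2(t^j h)f_j$. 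First I would note that, since $(t^{k_1}-1)$ and $(t^{k_2}-1)$ both divide $(t^{k_1k_2}-1)$, the character $\chi_1\chi_2$ factors through $H/(t^{k_1k_2}-1)$, so the target $\al_{(k_1k_2,\chi_1\chi_2)}$ is indeed defined.

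Next, using that $k_1$ and $k_2$ are coprime, I would invoke the ring isomorphism $\ZZ/k_1k_2\xrightarrow{\cong}\ZZ/k_1\times\ZZ/k_2$, $m\mapsto(m\modd k_1,\,m\modd k_2)$, and define $\Phi\co V_1\otimes V_2\to\CC^{k_1k_2}$ by $\Phi(e_i\otimes f_j)=g_m$, where $g_0,\dots,g_{k_1k_2-1}$ is the standard basis and $m$ is the unique residue with $m\equiv i\modd k_1$ and $m\equiv j\modd k_2$. Since this is a bijection of bases, $\Phi$ is a linear isomorphism, and it remains to check that it intertwines the two actions. For $h\in H$, the tensor eigenvalue on $e_i\otimes f_j$ is $\chi_1(t^i h)\chi_2(t^j h)$; as $\chi_1$ and $\chi_2$ are invariant under multiplication by $t^{k_1}$ and $t^{k_2}$ respectively, this equals $\chi_1(t^m h)\chi_2(t^m h)=(\chi_1\chi_2)(t^m h)$, which is precisely the eigenvalue of $h$ on $g_m$ in $\al_{(k_1k_2,\chi_1\chi_2)}$. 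For $\mu$, the tensor action sends $e_i\otimes f_j$ to $z_1z_2\,(e_{i+1}\otimes f_{j+1})$, and under the CRT bijection the pair $(i+1,j+1)$ corresponds to $m+1$; hence $\Phi$ conjugates the tensor $\mu$-action into the cyclic shift $g_m\mapsto z_1z_2\,g_{m+1}$ defining $\al_{(k_1k_2,\chi_1\chi_2,z_1z_2)}$.

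The one step demanding genuine care --- and the only place I expect resistance --- is confirming that $z_1z_2$ is an admissible scalar for the target $\SL$ representation, namely that $(z_1z_2)^{k_1k_2}=(-1)^{k_1k_2+1}$. Here $(z_1z_2)^{k_1k_2}=(z_1^{k_1})^{k_2}(z_2^{k_2})^{k_1}=(-1)^{(k_1+1)k_2+(k_2+1)k_1}=(-1)^{k_1+k_2}$, and coprimality forbids $k_1$ and $k_2$ from both being even, so $(-1)^{k_1+k_2}=(-1)^{k_1k_2+1}$. Since the isomorphism type of $\al_{(k_1k_2,\chi_1\chi_2,z)}$ does not depend on the admissible choice of $z$, taking $z=z_1z_2$ identifies the intertwined representation with $\al_{(k_1k_2,\chi_1\chi_2)}$, which finishes the proof. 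I would also verify in passing that no spurious scalar accumulates at the wrap-around of either cycle: the defining matrices assign the same factor $z_1$ (resp.\ $z_2$, resp.\ $z_1z_2$) to the step $e_{k_1-1}\mapsto e_0$ as to every other step, so $\Phi$ is an exact intertwiner rather than merely a projective one.
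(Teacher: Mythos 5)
Your proof is correct and takes essentially the same approach as the paper, which likewise uses the Chinese Remainder Theorem to identify the bases, setting $f_i := e_{i \bmod k_1,1}\otimes e_{i \bmod k_2,2}$ for $i=0,\dots,k_1k_2-1$ and noting that coprimality makes these vectors distinct. Your write-up simply supplies the details the paper compresses into ``one can easily see,'' most notably the explicit check that the scalar $z_1z_2$ is admissible, i.e.\ that $(z_1z_2)^{k_1k_2}=(-1)^{k_1k_2+1}$, which indeed hinges on $k_1$ and $k_2$ not both being even.
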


For unitary representations, this proposition was stated and proved as Proposition 4.6 in \cite{Fr04}.
The proof carries over to the general case, and we quickly outline the argument for the reader's convenience.

\begin{proof}
 Denote by $e_{11},\dots,e_{k_11}$ and $e_{12},\dots,e_{k_22}$
the canonical bases of
$\CC^{k_1}$ and $\CC^{k_2}$. Set
$f_i:=e_{i \modd k_1,1} \otimes e_{i \modd k_2,2}$
for $i=0,\dots,k_1k_2-1$.
Since $k_1$ and $k_2$ are coprime it follows that the $f_i$'s are distinct. In particular $\{f_i\}_{
i=0,\dots,k_1k_2-1}$ form a  basis for $\CC^{k_1}\otimes \CC^{k_2}$.
One can easily see that the representation $\al_{(k_1,\chi_1)} \otimes \al_{(k_2,\chi_2)}$ with respect to this basis
is just $\al_{(k_1k_2,\chi_1 \chi_2)}$.
\end{proof}

\section{Wada's question}\label{section:wada}

Let $K\subset S^3$ be an oriented knot  and let $\al$ and $\be$ be representations of $\pi_K$.
Wada \cite{Wa11} asked whether the twisted  Alexander polynomial $\De_K^{\al\otimes \be}(t)$ is determined by $\De_K^\al(t)$ and $\De_K^\be(t)$.

The following theorem gives a negative answer to
 any reasonable interpretation of Wada's question.

\begin{thm}\label{thm:wadanegative}
There exist two knots $S$ and $S'$, an isomorphism of the metabelian quotient groups
\[ \Phi\co \pi_S/\pi_S^{(2)}\to \pi_{S'}/\pi_{S'}^{(2)},\]
and metabelian unitary representations $\al'$ and $\be'$ of $\pi_{S'}$ such that,
for the induced  metabelian representations $\al$ and $\be$ of $\pi_S$ given by
$\al =\al' \circ \Phi$ and $\be = \be' \circ \Phi,$ we have that
\[ \De_S^{\al}(t) \doteq  \De_{S'}^{\al'}(t) \text{ and } \De_S^{\be}(t) \doteq   \De_{S'}^{\be'}(t),\text{ but } \De_S^{\al \otimes \be}(t) \not\doteq \De_{S'}^{\al'\otimes \be'}(t).\]
\end{thm}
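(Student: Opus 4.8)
The plan is to realize $S$ and $S'$ as satellite knots sharing a common orbit and axis but having different companions, and then to play the satellite formula of Lemma~\ref{lemmahofsat} against the tensor product identity of Proposition~\ref{prop:tensor}. First I would fix a base (orbit) knot $K$ together with an unknotted, null-homologous simple closed curve $A\subset X_K$, so that $[A]\in\pi_K^{(1)}$ has nontrivial image $h$ in the Alexander module $H=H_1(X_K;\ZZ[t^{\pm1}])$. I would then choose two unitary metabelian representations $\al_0=\al_{(k_1,\chi_1)}$ and $\be_0=\al_{(k_2,\chi_2)}$ of $\pi_K$ with $k_1,k_2$ coprime, as supplied by the classification recalled in Section~\ref{section:metab}. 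Writing $\{z_i\}$ and $\{u_j\}$ for the eigenvalues of $\al_0([A])$ and $\be_0([A])$ — all roots of unity, since the representations are unitary — the eigenvalues of $(\al_0\otimes\be_0)([A])$ are exactly the products $\{z_iu_j\}$, and Proposition~\ref{prop:tensor} identifies $\al_0\otimes\be_0\cong\al_{(k_1k_2,\chi_1\chi_2)}$ as a standard metabelian representation.

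Next, for companion knots $C$ and $C'$ I would set $S=S(K,C,A)$ and $S'=S(K,C',A)$. Because $A$ is null-homologous, Lemma~\ref{lem:samealexandermodule} with $n=1$ gives isomorphisms $\pi_S/\pi_S^{(2)}\cong\pi_K/\pi_K^{(2)}\cong\pi_{S'}/\pi_{S'}^{(2)}$, whose composite defines $\Phi$. Pulling $\al_0,\be_0$ back through these isomorphisms produces $\al',\be'$ on $\pi_{S'}$ and $\al=\al'\circ\Phi$, $\be=\be'\circ\Phi$ on $\pi_S$; since the axis has the same image in $\pi_K/\pi_K^{(2)}$ viewed from either side, the operators $\al([A])$, $\al'([A])$, and $\al_0([A])$ all coincide, and similarly for $\be$ and for the tensor product. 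Applying Lemma~\ref{lemmahofsat} then collapses each of the three comparisons to a companion factor over a common (nonzero) base polynomial $\De_K^{\al_0}$, $\De_K^{\be_0}$, $\De_K^{\al_0\otimes\be_0}$. As the $\doteq$-indeterminacy rescales only by a unit-modulus scalar for unitary representations, I obtain $\De_S^{\al}\doteq\De_{S'}^{\al'}$ iff $\prod_i|\De_C(z_i)|=\prod_i|\De_{C'}(z_i)|$, the analogous statement for $\be$, and $\De_S^{\al\otimes\be}\not\doteq\De_{S'}^{\al'\otimes\be'}$ iff $\prod_{i,j}|\De_C(z_iu_j)|\ne\prod_{i,j}|\De_{C'}(z_iu_j)|$.

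The whole theorem thus reduces to an elementary but delicate requirement on two Alexander polynomials: $\De_C$ and $\De_{C'}$ must agree in absolute value on the individual eigenvalue sets $\{z_i\}$ and $\{u_j\}$ while disagreeing on the enlarged product set $\{z_iu_j\}$. The coprimality of $k_1$ and $k_2$ is exactly what makes this possible: the product set contains roots of unity (for instance primitive $(k_1k_2)$-th roots) lying in neither $\{z_i\}$ nor $\{u_j\}$, leaving genuine freedom to match the companions on the latter while separating them on the former. Concretely I would rewrite $\De_C,\De_{C'}$ as polynomials in $s=t+t^{-1}$ and translate the constraints into equalities and a single inequality among their values at the finitely many points $s=z+z^{-1}$.

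I expect the main obstacle to be the \emph{simultaneous} realizability of all this data by honest knots rather than by formal symmetric polynomials: one must exhibit actual companions $C,C'$, together with base data $K,A,\chi_1,\chi_2$, for which the numerical identities and the final inequality hold on the nose. I would address this by a direct search among tabulated low-crossing knots — presumably the point at which the braid descriptions of \cite{St12} enter — selecting $C,C'$ with matching values at the second- and third-order roots of unity but differing values at the sixth roots, and checking that the corresponding eigenvalue sets $\{z_i\},\{u_j\}$ are genuinely realized by metabelian representations of the chosen base knot.
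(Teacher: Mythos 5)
Your proposal follows essentially the same route as the paper's proof: satellite knots $S=S(K,C,A)$ and $S'=S(K,C',A)$ with common orbit and axis but different companions, Lemma \ref{lem:samealexandermodule} supplying $\Phi$, and Lemma \ref{lemmahofsat} together with Proposition \ref{prop:tensor} reducing all three comparisons to values of $\De_C$ and $\De_{C'}$ at the eigenvalue sets $Z_1$, $Z_2$ and the product set $Z$, followed by a search among tabulated knots for the required numerical coincidence. The paper carries out exactly the search you describe: $K$ is the trefoil, with characters realizing $Z_1=\{e^{\pm 2\pi i/3}\}$ and $Z_2=\{-1,-1,1\}$ (its Lemma \ref{lem:trefoil}), and $C=9_{30}$, $C'=11a359$, whose Alexander polynomials give matching products $484$ and $\pm 2809$ on $Z_1$ and $Z_2$ but differing products $937024\ne 3748096$ on $Z$.
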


We begin by outlining the strategy for proving this theorem.
Let $K$ be an oriented knot, let  $A\subset S^3\sm \nu K$ a curve, unknotted in $S^3$, which is null-homologous in $S^3\sm \nu K$.
Let $C$ and $C'$ be two knots.
We write $S=S(K,C,A)$ and $S'=S(K,C',A)$.
By Lemma \ref{lem:samealexandermodule} we have isomorphisms
\[ \xymatrix{  \pi_S/\pi_S^{(2)} \ar[rd]^{f_*}_\cong &&\pi_{S'}/\pi_{S'}^{(2)} \ar[dl]_{f'_*}^\cong\\
&\pi_K/\pi_K^{(2)},}\]
and we set $\Phi = (f'_*)^{-1} \circ f_*$.

We write $H=H_1(X_K;\ZZ[t^{\pm 1}])$. Let   $n_1,n_2$ be coprime numbers and let $\chi_i\co H/(t^{n_i}-1) \to S^1, i=1,2$ be two characters.
 We write $\al_i=\al(n_i,\chi_i)$. By a slight abuse of notation we denote the induced representations $\al_i\circ f_*$ and
 $\al_i\circ f_*'$ by $\al_i$ as well.
Recall that $\al_i, i=1,2$ are special linear representations, it thus follows from
Lemma \ref{lem:wadadefined}  that the corresponding twisted Alexander polynomials are well-defined up to multiplication
by a factor of the form $\pm t^i,i\in \ZZ$.
 We furthermore write
 \begin{eqnarray*} Z_1&:=&\{ \chi_1(A),\chi_1(tA),\dots,\chi_1(t^{n_1-1}A)\}, \\
 Z_2&:=&\{ \chi_2(A),\chi_2(tA),\dots,\chi_2(t^{n_2-1}A)\}, \\
 Z&:=&\{z_1\cdot z_2 \mid z_1\in Z_1\text{ and } z_2\in Z_2\} \\
 &=&\{ (\chi_1\chi_2)(A),\dots,(\chi_1\chi_2)(t^{n_1n_2-1}A)\},
 \end{eqnarray*}
where these sets consist of complex numbers counted according to multiplicity.
It follows from Proposition \ref{prop:tensor} and Lemma  \ref{lemmahofsat}
that
\begin{eqnarray*}
 \De_S^{\al_1}(t)&\doteq& \De^{\al_1}_K(t) \cdot \prod_{z_1\in Z_1} \De_C(z_1), \\
    \De_S^{\al_2}(t)&\doteq& \De^{\al_2}_K(t) \cdot \prod_{z_2\in Z_2} \De_C(z_2),\\
 \De_S^{\al_1\otimes \al_2}(t)&\doteq& \De^{\al_1\otimes \al_2}_K(t) \cdot \prod_{z\in Z} \De_C(z),\end{eqnarray*}
and similarly for $S'$.

Thus, it suffices to find two knots $C$ and $C'$ and integers $n_1,n_2$  such that,
where products are taken with multiplicities,
\begin{eqnarray*}
\prod_{z_1\in Z_1} \De_C(z_1) &=& \pm \prod_{z_1\in Z_1} \De_{C'}(z_1), \\
\prod_{z_2\in Z_2} \De_C(z_2) &=& \pm \prod_{z_2\in Z_2} \De_{C'}(z_2), \\
\prod_{z\in Z} \De_C(z) &\neq& \pm \prod_{z\in Z} \De_{C'}(z).
\end{eqnarray*}

\begin{lemma} \label{lem:trefoil}
Let $K$ be  the trefoil knot and let $H=H_1(X_K;\ZZ[t^{\pm 1}])$ be its Alexander module.
There exists a curve   $A\subset S^3\sm \nu K$, which is unknotted in $S^3$
 and which is null-homologous in $S^3 \sm \nu K$, and characters $\chi_1 \co H/(t^2-1)\to \ZZ/3\to S^1$ and
 $\chi_2 \co H/(t^3-1)\to \ZZ/2\to S^1$ such that
 \[ Z_1=\{e^{2\pi i/3},e^{-2\pi i/3}\} \text{ and } Z_2=\{-1,-1, 1\}.\]
\end{lemma}

\begin{proof}

We start out by recalling several well-known facts from knot theory.
Let $K$ be a knot and let $H=H_1(X_K;\ZZ[t^{\pm 1}])$ its Alexander module.
Given $k\in \NN$ we denote by  $L_k$ the $k$-fold branched cover of $K$. Note that the cyclic group
$C_k = \langle t\,|\,t^k=1\rangle$ naturally acts on $H_1(L_k;\ZZ)$.
Also note that  there exists a canonical isomorphism $$H/(t^k-1)\xrightarrow{\cong} H_1(L_k;\ZZ)$$ which is equivariant
with respect to the $C_k$ action.

Now suppose that $K$ is a fibered knot with fiber $\Si$ of genus $g$ and monodromy map $\varphi \co \Si \to \Si$.
We pick a basis for $H_1(\Si;\ZZ)$, i.e. we pick an identification $H_1(\Si;\ZZ)=\ZZ^{2g}$, and we denote by $V$ the corresponding Seifert matrix.

The induced map $\varphi_* \co H_1(\Si;\ZZ)\to H_1(\Si;\ZZ)$ on homology is
represented by the matrix $M:=V^{-1}V^t$, and for any $k$, we have the commutative diagram
\[ \xymatrix{ \ZZ^{2g}\ar[r]^{\cong} \ar[d] & H_1(\Si;\ZZ)\ar[d] \\ \ZZ^{2g}/(\id-M^k)\ZZ^{2g}\ar[r]^-{\cong} &H_1(L_k;\ZZ),} \]
which gives a canonical identification of $H_1(L_k;\ZZ)$ with $ \ZZ^{2g}/(\id-M^k)\ZZ^{2g}$. We also obtain the following commutative diagram
\[ \xymatrix{ \ZZ^{2g}/(\id-M^k) \ZZ^{2g}\ar[r]^-{\cong} \ar[d]^{\cdot M}&H_1(L_k;\ZZ)\ar[d]^{\cdot t}\\
\ZZ^{2g}/(\id-M^k) \ZZ^{2g} \ar[r]^-{\cong} &H_1(L_k;\ZZ).}\]

We now specialize to the case that $K$ is the trefoil knot, and
we let $\Si$ be the fiber of the genus one fibration $S^3\sm \nu K\to S^1$.
With an appropriate identification of  $H_1(\Si;\ZZ)$ with $\ZZ^2$, the Seifert matrix for $K$ is given by
\[ V=\begin{bmatrix}-1&0 \\ -1&-1\end{bmatrix}.\]

We first consider $H_1(L_2;\ZZ)$. By the above we can identify $H_1(L_2;\ZZ)$ with $\ZZ^2/(\id-M^2)\ZZ^2$,
where
\[ M:=V^{-1}V^t=\begin{bmatrix} 1& 1 \\ -1&0\end{bmatrix}.\]
It is straightforward to see that $H_1(L_2;\ZZ)\cong \ZZ/3$ and that $e_1=(1,0)$ is a non-trivial element.
We now pick $\chi_1 \co H_1(L_2;\ZZ)\to \ZZ/3\to S^1$ such that $\chi_1(e_1)=e^{2\pi i/3}$.
It is straightforward to verify that $\chi_1(Me_1)=e^{-2\pi i/3}$.
 
We now turn to $H_1(L_3;\ZZ)$.
Since \[M^3=\begin{bmatrix} -1&0\\ 0&-1\end{bmatrix},\]
we see immediately that $H_1(L_3;\ZZ) = \ZZ/2 \oplus \ZZ/2$. Further,
we have the following commutative diagram
\[ \xymatrix{
 \ZZ^2/(\id-M^3)\ZZ^2\ar[d]^{\cdot M}
 \ar[r]^-{\cong}
&H_1(L_3;\ZZ)\ar[d]^{\cdot t}
\\
\ZZ^2/(\id-M^3)\ZZ^2 \ar[r]^-{\cong}
& H_1(L_3;\ZZ)  }\]

We now denote by $\chi_2$ the character which is given by
\[ H_1(L_3;\ZZ)  \cong \ZZ/2\oplus \ZZ/2\to \ZZ/2\to S^1,\]
where the second map is  the projection on the first factor and the third map is given by sending $1 \in \ZZ/2$ to $-1 \in S^1$.
Thus $\chi_2(e_1)=-1$, and it follows that $\chi_2(Me_1)=-1$ and $\chi_2(M^2e_1)=1$.

We pick a simple closed curve $C$ on $\Si \subset S^3\sm \nu K$ corresponding to  $(1,0)\in H_1(\Si;\ZZ)=\ZZ^2$.
Note that $C$ is null--homologous in $S^3\sm \nu K$. Also note that after crossing changes we can find a simple closed curve
$A\subset S^3\sm \nu K$ which is unknotted in $S^3$ but which is homotopic in $S^3\sm \nu K$ to $C$.
It follows from the above that
\[ \begin{array}{rclrcll} \chi_1([A])&=&e^{2\pi i/3},& \chi_1(t[A])&=&e^{-2\pi i/3}& \text{ and }\\
\chi_2([A])&=&-1,& \chi_2(t[A])&=& -1, &\chi_2(t^2[A])=1.\end{array}\]
This completes the proof of the lemma. \end{proof}

We now take $n_1=2$ and $n_2=3$ and consider the knots  $C=9_{30}$ and $C' = 11a359$, which have Alexander polynomials given by
\begin{eqnarray*}
\De_{C}(t) &=& 1 - 5t + 12t^2 - 17t^3 + 12t^4 - 5t^5 + t^6, \\
\De_{C'}(t) &=& 6 - 13t + 15t^2 - 13t^3 + 6t^4.
\end{eqnarray*}
We pause to explain a key property about the branched covers of these knots underlying this calculation.
Letting $L_k$ and $L'_k$ denote the $k$-fold branched cover along $C$ and $C'$, respectively,
using \cite{CL12}, one can easily determine that
\begin{eqnarray*}
H_1(L_2;\ZZ) =  &\ZZ/53 &=H_1(L'_2;\ZZ), \\
H_1(L_3;\ZZ) =  &\ZZ/22 \oplus \ZZ/22& =H_1(L'_3; \ZZ),
\end{eqnarray*}
but that
\begin{eqnarray*}
H_1(L_6;\ZZ) &=& \ZZ/2 \oplus \ZZ/2 \oplus \ZZ/22 \oplus \ZZ/1166, \\
H_1(L'_6;\ZZ) &=& \ZZ/88 \oplus \ZZ/4664
\end{eqnarray*}
have different orders.

Using the sets $Z_1 = \{ e^{2 \pi i/3}, e^{-2 \pi i/3} \}$ and $Z_2=\{ -1, -1, 1\}$ from Lemma \ref{lem:trefoil}
and setting $Z  = \{z_1áz_2 \mid z_1 \in Z_ 1 \text{ and } z_2\in Z_2\}$,
after taking into account multiplicities, we compute that
\begin{eqnarray*}
  \prod_{z_1 \in Z_1} \De_{C}(z_1)\;=&484&=  \;\prod_{z_1 \in Z_1} \De_{C'}(z_1), \\
  \prod_{z_2 \in Z_2} \De_{C}(z_2)\;=&\pm 2809&=  \; \prod_{z_2 \in Z_2} \De_{C'}(z_2),
  \end{eqnarray*}
but that
\begin{eqnarray*}   \prod_{z \in Z} \De_{C}(z) &=&937024 \\
   \prod_{z \in Z} \De_{C'}(z) &=& 3748096.
   \end{eqnarray*}

We have now showed that  $S=S(K,C,A)$ and $S'=S(K,C',A)$ where  $K$ is the trefoil, $A$ is the curve given  Lemma \ref{lem:trefoil}
and $C=9_{30}$ and $C' = 11a359$, together with the representations $\al_1=\al(2,\chi_1)$ and $\al_2=\al(3,\chi_2)$
provide a negative answer to Wada's question. This now   completes our discussion of Theorem \ref{thm:wadanegative}.

\section{The Hirasawa-Murasugi conjectures}\label{section:hm}

\subsection{Conjecture A}\label{section:conja}

The following gives a proof of Conjecture A stated in the introduction. Recall that $\phi_n(t)$ is the
$n$-th cyclotomic polynomial and we set $A_{p,n} = \FF_p[t^{\pm 1}]/(\phi_n(t))$.

\begin{prop}\label{prop:conja}
Let $K$ be an oriented knot  together with an epimorphism
$\al \co \pi_1(X_K)\to \ZZ/n\ltimes A_{p,n}$, where $p$ is a prime.
Suppose that  $n$ and $p$ are coprime and  that $\phi_n(t)$ is irreducible over $\FF_p[t^{\pm 1}]$.
 Then
\[ \De_{K}^{\ga \circ \al}(t)=\frac{\De_K(t)}{1-t} \cdot F(t),\]
where $F(t)$ is an integer polynomial in $t^n$.
\end{prop}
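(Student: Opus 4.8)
The plan is to compute $\De_K^{\ga\circ\al}(t)$ by decomposing the permutation representation $\ga$ on $\ZZ[A_{p,n}]$ and matching its constituents with the classified irreducible metabelian representations $\al_{(n,\chi)}$. Since $\ZZ/n\ltimes A_{p,n}$ is metabelian, $\al$ factors through $\pi_K/\pi_K^{(2)}\cong \ZZ\ltimes H$, with the meridian mapping onto a generator of $\ZZ/n$ and $H$ mapping onto $A_{p,n}$. The group $G:=\ZZ/n\ltimes A_{p,n}$ permutes the basis $\{e_a\}_{a\in A_{p,n}}$ of $\ZZ[A_{p,n}]$, so the augmentation yields a short exact sequence $0\to I\to \ZZ[A_{p,n}]\to\ZZ\to 0$ of $\ZZ[G]$-modules, where $\ZZ$ carries the trivial action and $I$ is the augmentation ideal. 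Writing $\ga_I$ for the induced integral representation on $I$, the adapted basis puts $\ga$ in upper triangular form with diagonal blocks $\ga_I$ and the trivial representation $\ep$, so Lemma \ref{lem:propstap}(4) together with Lemma \ref{lem:propstap}(1) gives
$$\De_K^{\ga\circ\al}(t)\;\doteq\; \De_K^{\ep}(t)\cdot \De_K^{\ga_I\circ\al}(t)\;\doteq\; \frac{\De_K(t)}{1-t}\cdot F(t),$$
where $F(t):=\De_K^{\ga_I\circ\al}(t)$. It then remains to show $F(t)$ is an integer polynomial in $t^n$.

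Next I would analyze $F$ over $\CC$. The complexification $\ga_I\otimes\CC$ is the sum of the nontrivial characters $\psi$ of $A_{p,n}$, on which $\ZZ/n$ acts by $\psi\mapsto\psi\circ t^{-1}$, so $\ga_I\otimes\CC=\bigoplus_O V_O$ indexed by the nontrivial $\ZZ/n$-orbits $O$, with $V_O=\ind_{A_{p,n}}^{G}\CC_{\psi_O}$. Here both hypotheses enter: irreducibility of $\phi_n(t)$ over $\FF_p$ makes $A_{p,n}=\FF_p[t]/(\phi_n(t))$ the field $\FF_{p^k}$, and $p\nmid n$ forces $t$ to be a primitive $n$-th root of unity in this field, of multiplicative order exactly $n$; hence every nontrivial orbit has size exactly $n$. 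Thus each $V_O$ is $n$-dimensional, and composing with $\al$ identifies $V_O\circ\al$ with $\al_{(n,\chi_O)}$, where $\chi_O=\psi_O\circ(\al|_H)\co H/(t^n-1)\to\CC^*$ (legitimate since $t^n$ acts trivially on $A_{p,n}$): the restriction to $H$ is diagonal with entries $\chi_O(t^ih)$ and the generator of $\ZZ/n$ acts by the cyclic shift. As $O$ has size $n$, the translates $\chi_O,t\chi_O,\dots,t^{n-1}\chi_O$ are distinct, so $\al_{(n,\chi_O)}$ is an irreducible metabelian $\SL(n,\CC)$ representation with $n>1$, and Proposition \ref{prop:taptm} gives $\De_K^{\al_{(n,\chi_O)}}(t)\in\CC[t^{\pm1}]$, a polynomial in $t^n$. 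Applying Lemma \ref{lem:propstap}(4) once more over $\CC$ shows $F(t)\doteq\prod_O\De_K^{\al_{(n,\chi_O)}}(t)$ is a polynomial in $t^n$ lying in $\CC[t^{\pm1}]$.

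Finally I would pin down the coefficients. Since $\ga_I\circ\al\co\pi_K\to\GL(I)$ is an integral representation, $F=N(t)/D(t)$ is a quotient of integer Laurent polynomials, so $F\in\QQ(t)$; combined with the previous paragraph this forces $F\in\QQ[t^{\pm1}]$. Taking the defining generator $g_i$ to be a meridian, so $\phi(g_i)=1$, the denominator is $D(t)=\det(\id-t\,\ga_I(\al(g_i)))$, which has constant term $1$ and hence content $1$. Gauss's Lemma then upgrades the divisibility $D\mid N$ from $\QQ[t^{\pm1}]$ to $\ZZ[t^{\pm1}]$, giving $F\in\ZZ[t^{\pm1}]$. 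Therefore $F$ is an integer polynomial in $t^n$, which is the desired conclusion; the stated equality holds once $\De_K$ and the twisted polynomial are normalized in the standard way, removing the $\pm t^k$ indeterminacy of $\doteq$.

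The main obstacle I anticipate is the second step: identifying the induced summands $V_O\circ\al$ precisely with the classified representations $\al_{(n,\chi_O)}$ so that Proposition \ref{prop:taptm} applies, and in particular using the two hypotheses to guarantee that \emph{every} nontrivial orbit has size exactly $n$ — without this one would only obtain polynomials in $t^d$ for various $d\mid n$, and the key $t^n$-periodicity would be lost. The integrality via Gauss's Lemma is the technical crux but becomes routine once the primitivity of the denominator is observed. (The degenerate case $n=1$, where $\phi_1(t)=t-1$ and $G=\FF_p$, is handled separately by a direct computation and only requires $F\in\ZZ[t^{\pm1}]$.)
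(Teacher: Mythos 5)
Your proposal is correct and takes essentially the same route as the paper: both decompose the permutation module $\CC[A_{p,n}]$ into the trivial character plus $n$-dimensional monomial representations attached to the free $\ZZ/n$-orbits of nontrivial characters, apply Proposition \ref{prop:taptm} to each irreducible summand to get a complex polynomial in $t^n$, and then obtain integrality via Gauss's Lemma against a primitive (resp.\ monic) Wada denominator. The only notable local difference is your proof that every nontrivial orbit has size exactly $n$: you use that the hypotheses make $A_{p,n}$ the field $\FF_{p^k}$ in which $t$ has multiplicative order $n$ (so $t^i-1$ is invertible for $0<i<n$), whereas the paper argues via divisibility of cyclotomic polynomials over $\FF_p$ and an Euler-function comparison of degrees --- your version is, if anything, a bit cleaner.
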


\begin{remark}
The original statement of \cite[Conjecture~A]{HM09}  is slightly different, in that it is  a statement about two bridge knots and twisted Alexander polynomials of metabelian representations that factor through the alternating group $A_4.$ More specifically,
in  \cite[Conjecture~A]{HM09} Hirasawa and Murasugi  consider twisted Alexander polynomials corresponding
to representations of the form $\de \circ \al\co \pi_1(X_K)\to  A_4\to \GL(4,\ZZ)$,
where $\al$ is assumed to be an epimorphism and $\de$ is the canonical representation given by permutation matrices.
It is straightforward to see  that
\[ A_4\cong \ZZ_3\ltimes \FF_2[t]/(t^2+t+1),\]
and that under this isomorphism $\de\cong \ga$.
It now follows that Proposition \ref{prop:conja} implies  \cite[Conjecture~A]{HM09} as a special case.
\end{remark}

\begin{proof}
We start out with the following claim:

\begin{claim}
The action of $\ZZ/n$ is free on nonzero elements of $A_{p,n}$.
\end{claim}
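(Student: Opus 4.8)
The plan is to exploit the two hypotheses to identify $A_{p,n}$ as a finite field and then to pin down the multiplicative order of the distinguished element $t$. First I would observe that, since $\phi_n(t)$ is irreducible over $\FF_p[t]$, the quotient $A_{p,n}=\FF_p[t^{\pm 1}]/(\phi_n(t))$ is a field, namely $\FF_{p^k}$ with $k=\deg\phi_n(t)$. (Inverting $t$ is harmless here, since $t$ is already a unit: it is a root of $\phi_n(t)$, which divides $t^n-1$.)

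Next I would determine the order of the image of $t$ in the multiplicative group $A_{p,n}^\times$. From $\phi_n(t)\mid t^n-1$ we get $t^n=1$ in $A_{p,n}$, so the order of $t$ divides $n$. To see it equals $n$, I would invoke coprimality of $n$ and $p$: in that case $t^n-1$ is separable over $\FF_p$, its roots form a cyclic group of order $n$ in $\overline{\FF_p}$, and the roots of $\phi_n(t)$ are precisely the primitive $n$-th roots of unity, i.e. the generators of that group. Hence the image of $t$, being a root of $\phi_n(t)$, is a primitive $n$-th root of unity and has multiplicative order exactly $n$.

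With this in hand, freeness is immediate. If $a\in A_{p,n}$ is nonzero and $t^i\cdot a=a$ for some $i\in\ZZ/n$, then, $A_{p,n}$ being a field, we may cancel $a$ to obtain $t^i=1$, whence $n\mid i$ and $i\equiv 0\modd n$. Thus the stabilizer of every nonzero element is trivial and the $\ZZ/n$-action is free. The only point that genuinely requires the hypotheses — and hence the one to state carefully — is forcing the order of $t$ to be exactly $n$ rather than a proper divisor: irreducibility of $\phi_n(t)$ makes $A_{p,n}$ a field so that cancellation is available, while coprimality of $n$ and $p$ guarantees that $t$ really is a primitive $n$-th root of unity.
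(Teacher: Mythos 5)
Your proof is correct, but it takes a genuinely different route from the paper's. The paper never passes to the quotient field: it works inside the UFD $\FF_p[t^{\pm 1}]$, writing $a(t)(t^k-1)=b(t)\phi_n(t)$ for a hypothetical nonzero $a(t)$ fixed by $t^k$, factoring $t^k-1=\prod_{d\mid k}\phi_d(t)$, using primality of the irreducible $\phi_n(t)$ to conclude $\phi_n(t)\mid\phi_d(t)$ for some $d\mid k$, and then forcing $d=k=n$ by comparing totients, $\varphi(n)\le\varphi(d)\le\varphi(k)\le\varphi(n)$. You instead observe that irreducibility makes $A_{p,n}$ the field $\FF_{p^{\varphi(n)}}$, pin down the multiplicative order of the image of $t$ as exactly $n$, and obtain freeness by cancellation. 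Your route has the advantage of making transparent which hypothesis does what: irreducibility yields cancellation, while coprimality of $n$ and $p$ yields that $t$ is a primitive $n$-th root of unity. Notably, coprimality is genuinely needed and enters the paper's argument only silently, in the step ``this is only possible if $n=d=k$'': for $n=6$, $p=2$ one has $\phi_6\equiv\phi_3 \modd 2$ irreducible over $\FF_2$, the totient chain degenerates since $\varphi(3)=\varphi(6)$, and the claim actually fails because $t^3=1$ in $A_{2,6}\cong\FF_4$; so your explicit appeal to $p\nmid n$ is exactly the right move. The trade-off is that you import without proof the classical fact that, for $p\nmid n$, the roots of $\phi_n(t)$ in $\overline{\FF}_p$ are precisely the primitive $n$-th roots of unity; this is standard (it follows from $t^n-1=\prod_{d\mid n}\phi_d(t)$ together with separability of $t^n-1$), whereas the paper's divisibility-and-degree argument is self-contained within the polynomial ring. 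Granting that standard fact, your proof is complete.
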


Suppose $a(t)\in A_{p,n}$ is a nonzero element and let $k\in \NN$ be the smallest $k$ with $t^ka(t)=a(t)$.
Note that $k$ necessarily divides $n$. We want to show that $k=n$.

First note that $a(t)(t^k-1)=0\in A_{p,n} = \FF_p[t^{\pm 1}]/(\phi_n(t))$ implies that there exists $b(t)\in \FF_p[t^{\pm 1}]$ such that
\[ a(t)(t^k-1)=b(t)\phi_n(t)\in \FF_p[t^{\pm 1}].\]
Note that over $\ZZ[t^{\pm 1}]$ we have $t^k-1= \prod_{d|k} \phi_d(t)$, in particular we see that
\[a(t)\prod_{d|k} \phi_d(t)=b(t)\phi_n(t)\in \FF_p[t^{\pm 1}].\]
By assumption $\phi_n(t)$ is irreducible over $\FF_p[t^{\pm 1}]$.
Since $\FF_p[t^{\pm 1}]$ is a UFD it follows that $\phi_n(t)$ is a prime element in $\FF_p[t^{\pm 1}]$.
Since $a(t)\ne 0\in \FF_p[t^{\pm 1}]/(\phi_n(t))$ it follows that $\phi_n(t)$ divides $\phi_d(t)$ in $\FF_p[t^{\pm 1}]$ for some $d|k$.
Comparing degrees and using that $k|n$  we see that the Euler function applied to $n,d$ and $k$ satisfies
\[ \varphi(n) \leq \varphi(d)\leq \varphi(k)\leq \varphi(n).\]
This is only possible if $n=d=k$.
This concludes the proof of the claim.

We  denote by $X_{p,n}:=\Hom(A_{p,n},S^1)$ the set of all characters on the finite abelian group $A_{p,n}$.
Let $\chi\in X_{p,n}$. We denote by $\CC_\chi$ the one dimensional complex vector space
viewed with the $A_{p,n}$-action given by $\chi$. It is well-known that
\begin{equation} \label{equ:c1}
\CC[A_{p,n}]\cong \bigoplus_{\chi \in X_{p,n}}\CC_\chi
\end{equation}
as $\CC[A_{p,n}]$-modules.

Given $k\in \ZZ$ we denote by $t^k\chi$ the character given by $t^k\chi(v):=\chi(t^kv)$.
Note that this defines an action by $\ZZ/n$ on $X_{p,n}$.
We now denote by $\chi_0$ the trivial character on $A_{p,n}$ and we write $X'_{p,n}:=X_{p,n} \sm \chi_0$.
It follows from the above claim that $\ZZ/n$ acts freely on $X'_{p,n}$.
We now pick coset representatives $\chi_1,\dots,\chi_\ell$ for the free $\ZZ/n$-action on $X'_{p,n}$.

We consider
\[ \begin{array}{rcl} V_0&=&\CC_{\chi_0},\\
V_1&=&\CC_{\chi_1}\oplus \CC_{t\chi_1}\oplus \dots \oplus \CC_{t^{n-1}\chi_1},\\
\vdots &&\vdots \\
V_\ell&=&\CC_{\chi_l}\oplus \CC_{t\chi_\ell}\oplus \dots \oplus \CC_{t^{n-1}\chi_\ell}.\end{array} \]
These are naturally modules over $\CC[A_{p,n}]$.
We furthermore equip $V_0$ with the trivial $\ZZ/n$ action and we let $\ZZ/n$ act on each $V_i$, $i=1,\dots,\ell$ by cyclically permuting the summands.
It is now straightforward to see that this turns $V_0,V_1,\dots,V_\ell$ into
$\CC[\ZZ/n \ltimes A_{p,n}]$-modules.
Note that $\ga$ also turns  $\CC[A_{p,n}]$ into a $\CC[\ZZ/n\ltimes A_{p,n}]$-module.
 It now  follows from \eqref{equ:c1}
that
\[ \CC[A_{p,n}]\cong V_0\oplus V_1\oplus \dots \oplus V_\ell\]
as $\CC[\ZZ/n \ltimes A_{p,n}]$-modules. We denote by $\ga_0,\ga_1,\dots,\ga_\ell$ the representations
corresponding to $V_0,V_1,\dots,V_\ell$. We can thus restate the above isomorphism as
$\ga\cong \ga_0\oplus \ga_1\oplus \dots \oplus \ga_\ell$.

Note that $\ga_0$ is the trivial one-dimensional representation.
Furthermore it follows from \cite[Lemma~2.2]{BF08} that $\ga_1,\dots,\ga_\ell$ are irreducible representations.
It thus follows from Lemma \ref{lem:propstap} combined with Proposition \ref{prop:taptm}  that
\begin{equation} \label{equ:tauk} \De_{K}^{\ga \circ \al}(t)=\prod\limits_{i=0}^\ell\De_{K}^{\ga_i\circ \al}(t)=\frac{\De_K(t)}{1-t} \cdot F(t)\end{equation}
where $F(t)$ is a complex  polynomial in $t^m$. It remains to show the following claim:

\begin{claim}
$F(t)$ is  an integral polynomial.
\end{claim}

First note that by definition $\De_K^{\varrho\circ \al}(t)$ can be written as a quotient of two polynomials $x(t)$ and $y(t)$ with integral coefficients
and such that $y(t)$  is a monic polynomial.
Furthermore it follows from \eqref{equ:tauk} that $\De_K^{\varrho\circ \al}(t)\cdot (1-t)=\frac{x(t)}{y(t)}\cdot (1-t)$ is a rational polynomial. Since $y(t)$ is monic this implies that
$\De_K^{\varrho\circ \al}(t)\cdot (1-t)$ is in fact an integral polynomial.

We now see that $F(t)\cdot \De_K(t)$ is an integral polynomial. It follows from the Gauss Lemma and from the fact that $\De_K(1)=\pm 1$ that $F(t)$ is also an integral polynomial.
This concludes the proof of the claim and also the proposition.
\end{proof}

\begin{remark}
Similar arguments can be used to establish Conjecture A$^\prime$ from the introduction.
We leave the details to the reader as a straightforward exercise.
\end{remark}

\subsection{Conjecture B (1)}

We start out with the following observation which shows that a weaker version of Conjecture B (1) holds.

\begin{lemma}\label{lem:conjboverc}
Let $K$ be an oriented knot.
Let $p$ be an odd prime and let $\al\co \pi_K\to D_p$ be an epimorphism. Then
there exists a \emph{complex} polynomial $f(t)$ such that
\[ \De_{K}^{\varrho \circ \al}(t)=\frac{\De_K(t)}{1-t} \cdot f(t)f(-t).\]
\end{lemma}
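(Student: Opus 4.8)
The goal is to show that for an epimorphism $\al\co \pi_K\to D_p$, the twisted Alexander polynomial $\De_K^{\varrho\circ\al}(t)$ factors as $\frac{\De_K(t)}{1-t}\cdot f(t)f(-t)$ for some complex polynomial $f(t)$. Let me think about the representation theory of $D_p$ here.

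The permutation representation $\varrho\co D_p\to \GL(p,\ZZ)$ comes from the action on $\ZZ/p$. Over $\CC$, this $p$-dimensional permutation representation of $D_p$ decomposes. The dihedral group $D_p$ with $p=2\ell+1$ has irreducible representations: the trivial one, the sign representation (on which $y\mapsto 1$, $x\mapsto -1$), and $\ell$ two-dimensional irreducibles. The permutation representation on $\ZZ/p$ decomposes as the trivial representation plus the sum of all $\ell$ two-dimensional irreducibles (since permutation character on the $p$ points minus the trivial gives the reflection-type representation).

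Wait — I need to connect this to the $\al_{(n,\chi)}$ classification. Under $\al\co\pi_K\to D_p$, composing with $\varrho$ and decomposing, I'd expect $\varrho\circ\al \cong \text{(1-dim)} \oplus \bigoplus_{j=1}^\ell \rho_j$, where each $\rho_j$ is a $2$-dimensional metabelian representation. Since $m=2$ here, each $\rho_j$ should be one of the irreducible metabelian $\SL(2,\CC)$ or $\GL(2,\CC)$ representations $\al_{(2,\chi_j)}$ for appropriate characters $\chi_j$.

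**The key symmetry observation:**

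Let me think about why the factor has the form $f(t)f(-t)$. For a $2$-dimensional metabelian representation $\al_{(2,\chi)}$, the meridian maps to a matrix whose "cyclic part" is the antidiagonal with entries $z$ where $z^2 = -1$ (from $z^n=(-1)^{n+1}$ with $n=2$). The presence of this sign-swap structure, combined with Proposition~\ref{prop:taptm} saying each $\De_K^{\al_{(2,\chi_j)}}(t)$ is a polynomial in $t^2$, suggests that I should pair up the contributions.

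Here's my plan. First I would decompose $\varrho\circ\al$ over $\CC$ into the trivial representation together with two-dimensional irreducible metabelian pieces $\al_{(2,\chi_1)},\dots,\al_{(2,\chi_\ell)}$, using the classification from \cite{BF08} recalled in Section~\ref{section:metab}. By Lemma~\ref{lem:propstap}(4) (or the direct-sum multiplicativity), $\De_K^{\varrho\circ\al}(t)\doteq \De_K^\ep(t)\cdot\prod_{j=1}^\ell \De_K^{\al_{(2,\chi_j)}}(t)$, and $\De_K^\ep(t)\doteq \frac{\De_K(t)}{1-t}$ by Lemma~\ref{lem:propstap}(1). So the task reduces to showing $\prod_{j=1}^\ell \De_K^{\al_{(2,\chi_j)}}(t) = f(t)f(-t)$ for some $f$.

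**The main step:**

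The heart of the matter is analyzing each two-dimensional factor $\De_K^{\al_{(2,\chi_j)}}(t)$. By Proposition~\ref{prop:taptm}, each such factor is a polynomial in $t^2$, hence automatically invariant under $t\mapsto -t$. The cleanest route is to exhibit, for each $j$, a single complex polynomial $g_j(t)$ with $\De_K^{\al_{(2,\chi_j)}}(t)\doteq g_j(t)\,g_j(-t)$. This should follow from relating the $2$-dimensional representation $\al_{(2,\chi_j)}$ to the pair of $1$-dimensional characters $\tau_{\chi_j}$ and $\tau_{\chi_j}\circ(t\mapsto -t)$ that it restricts to on the index-two abelian subgroup, invoking Lemma~\ref{lem:propstap}(2) to identify the $1$-dimensional twisted Alexander polynomials as $\frac{\De_K(tz)}{1-tz}$ and its sign-conjugate. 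Setting $f(t) = \prod_j g_j(t)$ then yields $\prod_j \De_K^{\al_{(2,\chi_j)}}(t)\doteq f(t)f(-t)$, completing the argument.

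The main obstacle I anticipate is making the $g_j(t)g_j(-t)$ factorization of each two-dimensional piece precise and canonical: one must track the indeterminacy $\doteq$ carefully (scalars $\pm t^k r$ from Lemma~\ref{lem:wadadefined}) and ensure that the individual factors genuinely recombine into $f(t)f(-t)$ rather than merely into a polynomial in $t^2$. The subtlety is that "polynomial in $t^2$" is weaker than "of the form $f(t)f(-t)$," so the real work is establishing the stronger factored form for the $2$-dimensional building blocks — presumably by explicitly computing $\De_K^{\al_{(2,\chi_j)}}(t)$ in terms of a Seifert/abelian-cover presentation and recognizing the square-type structure coming from the order-two generator $x$ of $D_p$.
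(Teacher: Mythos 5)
There is a genuine gap, and it lies exactly where you yourself locate the ``real work.'' Your reduction is fine: decomposing $\varrho\circ\al$ over $\CC$ into the trivial representation plus the $\ell$ two-dimensional irreducible metabelian pieces $\al_{(2,\chi_j)}$, and invoking Lemma \ref{lem:propstap} and Proposition \ref{prop:taptm}, does show that $\De_K^{\varrho\circ\al}(t)$ equals $\frac{\De_K(t)}{1-t}$ times a complex polynomial in $t^2$. (The paper gets to the same point even faster by quoting Proposition \ref{prop:conja} with $n=2$, since $D_p\cong \ZZ/2\ltimes A_{p,2}$ and $\ga\cong\varrho$ in that case.) But your final step is never carried out --- it is only asserted that each two-dimensional factor ``should'' split as $g_j(t)g_j(-t)$ via restriction to the index-two abelian subgroup --- and, worse, the route you propose rests on a false premise. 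You write that ``polynomial in $t^2$ is weaker than of the form $f(t)f(-t)$,'' but over $\CC$ these conditions are \emph{equivalent}, and this equivalence is precisely the paper's (one-line) proof: write the polynomial in $t^2$ as $g(t^2)$, factor $g(s)=C\prod_{i=1}^r(\al_i-s)$ over $\CC$, choose square roots, and set $f(t)=\sqrt{C}\prod_{i=1}^r(\sqrt{\al_i}+t)$, so that
\[ f(t)f(-t)=C\prod_{i=1}^r(\al_i-t^2)=g(t^2).\]
The distinction you worry about is real only over $\ZZ$ (or $\QQ$), where square roots are unavailable --- which is exactly why Conjecture B (1) fails in general while this lemma, which only asks for a \emph{complex} $f$, holds. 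So no analysis of the individual two-dimensional blocks is needed at all.

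A secondary problem: the mechanism you sketch for factoring each $\De_K^{\al_{(2,\chi_j)}}(t)$ --- relating it via Lemma \ref{lem:propstap} (2) to the characters obtained by restricting $\al_{(2,\chi_j)}$ to the index-two subgroup --- is not a legitimate application of that lemma, which concerns one-dimensional representations of $\pi_K$ itself, not of a subgroup; making that idea rigorous would require a Shapiro-type argument about induced representations and the double cover, machinery the statement does not need. Once you replace your final step by the elementary square-root factorization above, your proof closes up and is essentially the paper's argument with a longer derivation of the ``polynomial in $t^2$'' input.
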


\begin{proof}
It follows immediately from Proposition \ref{prop:conja} that  there
exists an integer polynomial $g(t)$ with
\[ \De_{K}^\al(t)=\frac{\De_K(t)}{1-t} \cdot g(t^2).\]
We can factor the polynomial $g(s)$ as follows:
\[ g(s)=C\cdot \prod_{i=1}^r(\al_i-s),\]
for some $C,\al_1,\dots,\al_r\in \CC$. We now pick square roots for $C,\al_1,\dots,\al_r$.
It then follows that
\[ g(t^2)=C\cdot \prod_{i=1}^r(\al_i-t^2)=\sqrt{C}\prod_{i=1}^r(\sqrt{\al_i}+t)\cdot \sqrt{C}\prod_{i=1}^r(\sqrt{\al_i}-t).\]
The complex polynomial $f(t)=\sqrt{C}\prod_{i=1}^r(\sqrt{\al_i}+t)$ thus has the required property.
\end{proof}

We propose a counterexample to Conjecture B (1).
We consider the knot $K=10_{164}$.
The untwisted Alexander polynomial equals
\[ \De_K(t)=3-11t+17t^2-11t^3+3t^4.\]
According to \cite{St12}, the knot $K$ is the closure of the braid on four strands
given in terms of the standard generators as
$$\be = \si_1 \si_2^{-1} \si_3^2 \si_2^{-1}\si_1 \si_2^{-1} \si_3^{-1}\si_2^{-1}\si_1 \si_2^{-1}.$$

This gives rise to a presentation of $\pi_K  = \pi_1(S^3\sm \nu K)$
with generators $a, b, c, d, e, f, g, h, i, j, k$
and relators
\[ \begin{array}{lllllllll}
&b^{-1}a^{-1}ea,&a^{-1}cfc^{-1},&d^{-1}f^{-1}gf,&f^{-1}g^{-1}hg,&c^{-1}hih^{-1},\\
&h^{-1}e^{-1}je,&e^{-1}iki^{-1},&k^{-1}gdg^{-1},&i^{-1}gbg^{-1},&g^{-1}j^{-1}aj.
\end{array} \]

\begin{figure}[ht]
\begin{center}
\leavevmode\hbox{}
\includegraphics[scale=0.90]{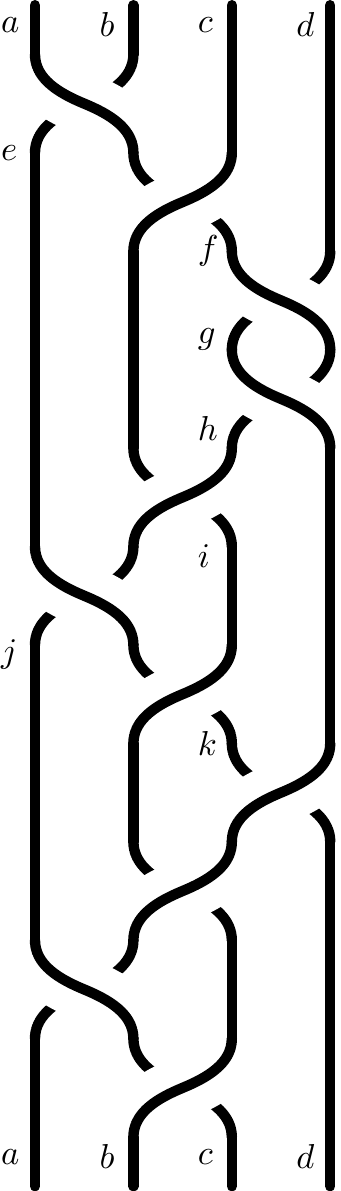}
\caption{A braid whose closure is the knot $K=10_{164}$ with the Wirtinger generators for the knot group $\pi_K$.} \label{braid}
\end{center}
\end{figure}

It can easily be checked that one gets a representation
\[ \al\co \pi_K \to D_3=\langle x,y \mid x^2=y^3=1, xyx=y^{-1}\rangle\]
by setting
\[ \begin{array}{rclrclrclrcl}
a&\mapsto& xy^2, 	&d&\mapsto& xy, 	&g&\mapsto& x, 	&j&\mapsto& xy, \\
b&\mapsto& x, 		&e&\mapsto& xy,  	&h&\mapsto& xy, 	&k&\mapsto& xy^2. \\
c&\mapsto& xy^2, 	&f&\mapsto& xy^2, 	&i&\mapsto& x,
\end{array} \]
The induced representation $\varrho\circ \al \co \pi_K \to \GL(3,\CC)$ is unitary and has a trivial summand given by the span of the vector $(1,1,1).$ Its orthogonal complement  gives a nonabelian rank 2 representation of $D_3$, which is necessarily equivalent to the unique irreducible $U(2)$ representation of $D_3.$ Using these observations, one can compute
the twisted Alexander polynomial using KnotTwister \cite{Fr12} and/or sage \cite{sage} to see that
\[ \De_K^{\varrho\circ \al}(t)=\frac{(3-11t+17t^{2}-11t^{3}+3t^{4})(3-13t^{2}+13t^{4}-3t^{6})}{(t-1)} \in \Bbb{Q}(t). \]
Note that
\[ 3-13t^{2}+13t^{4}-3t^{6}= - (t^2 - 1)(t^2 - 3)(3t^2 - 1),\]
which we can write as $f(t)f(-t)$ by
taking $$f(t) = (t+1)(t+\sqrt{3})(\sqrt{3}t+1).$$
However, it is not possible to  choose $f(t)\in \ZZ[t^{\pm 1}]$,
since $\ZZ[t^{\pm 1}]$ is a UFD and both $(t^2 - 3)$ and $(3t^2 - 1)$ are irreducible.
This gives the desired counterexample to Conjecture B (1).

\bigskip

Although Conjecture B (1) is in general false,  based on Proposition \ref{lem:conjboverc}, for a given knot $K$,
it is an interesting problem  to determine the minimal subring $R \subset \CC$
for which $g(t)$ can be factored as $g(t)=f(t)f(-t)$ for $f(t) \in R[t].$

\subsection{Conjecture B (2)}

The following  proposition  provides a proof for
Conjecture B (2).

\begin{prop}
Let $p=2\ell+1$ be an odd prime and let $K$ be an oriented knot  together with an epimorphism
$\al \co \pi_K\to D_p$. Then
\[ \De_{K}^{\varrho \circ \al}(t)\equiv \left(\frac{\De_K(t)}{1-t}\right)^{\ell+1} \cdot
 \left(\frac{\De_K(-t)}{1+t}\right)^{\ell} \modd p.\]
\end{prop}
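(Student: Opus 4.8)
The plan is to reduce everything to a mod $p$ computation and then exploit the fact that, in characteristic $p$, the permutation module underlying $\varrho$ is no longer semisimple. First I would invoke Lemma \ref{lem:propstap}(5): since $\varrho\circ\al\co\pi_K\to\GL(p,\ZZ)$ takes values in $\Aut_\ZZ(\ZZ^p)$, its mod $p$ reduction $(\varrho\circ\al)_p\co\pi_K\to\GL(p,\FF_p)$ satisfies $\De_K^{(\varrho\circ\al)_p}(t)\equiv\De_K^{\varrho\circ\al}(t)\modd p$. Thus it suffices to compute $\De_K^{(\varrho\circ\al)_p}(t)$ modulo $p$. The heart of the matter is to understand the $\FF_p[D_p]$-module structure of $\FF_p^p=\FF_p[\ZZ/p]$. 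Writing $y$ for the generator of $\ZZ/p$ and $s=y-1$, the identity $y^p-1=(y-1)^p$ in characteristic $p$ exhibits $\FF_p[\ZZ/p]\cong\FF_p[s]/(s^p)$ as a \emph{uniserial} module, with the chain of ideals $(s^j)$, $j=0,\dots,p$, giving a filtration whose successive quotients are one dimensional.

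The key computation is to pin down the $D_p$-action on these graded pieces. The element $x$ acts on $\FF_p[\ZZ/p]$ by the algebra automorphism $y\mapsto y^{-1}$, so $x(s)=y^{-1}-1=-s+s^2-\cdots$. Hence each ideal $(s^j)$ is $x$-stable, $y$ acts trivially on $(s^j)/(s^{j+1})$, and $x$ acts there by $(-1)^j$. Therefore the $j$-th graded piece is the character $\psi_j\co D_p\to\FF_p^*$ with $\psi_j(y)=1$ and $\psi_j(x)=(-1)^j$. The filtration is $D_p$-invariant, so in a compatible basis $(\varrho\circ\al)_p$ is block upper triangular, and applying Lemma \ref{lem:propstap}(4) repeatedly gives
\[ \De_K^{(\varrho\circ\al)_p}(t)\doteq\prod_{j=0}^{p-1}\De_K^{\psi_j\circ\al}(t).\]

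To finish, I would identify the factors by lifting each $\psi_j$ to its integral counterpart $\widetilde\psi_j\co D_p\to\{\pm1\}\subset\ZZ^*$ and invoking Lemma \ref{lem:propstap}(5) once more. For even $j$, $\widetilde\psi_j\circ\al$ is trivial, so Lemma \ref{lem:propstap}(1) yields the factor $\De_K(t)/(1-t)$; for odd $j$, $\widetilde\psi_j\circ\al$ is the sign representation, which—because $\al$ surjects onto $D_p$ and hence onto $D_p^{\mathrm{ab}}=\ZZ/2$—sends the meridian to $-1$, so Lemma \ref{lem:propstap}(2) with $z=-1$ yields the factor $\De_K(-t)/(1+t)$. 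Among $j\in\{0,\dots,p-1\}=\{0,\dots,2\ell\}$ there are $\ell+1$ even and $\ell$ odd values, which produces exactly the claimed product modulo $p$.

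I expect the main obstacle to be the second step: correctly identifying the non-split composition series and, in particular, the sign $(-1)^j$ by which $x$ acts on each graded piece. This is precisely what makes the statement a genuinely mod $p$ assertion: over $\CC$ the module $\CC[\ZZ/p]$ is semisimple, splitting as the trivial representation together with $\ell$ two-dimensional irreducibles, whereas in characteristic $p$ it degenerates to a uniserial module with $\ell+1$ trivial and $\ell$ sign composition factors, which is the source of the asymmetric exponents $\ell+1$ and $\ell$. The only remaining bookkeeping is to track the $\doteq$ indeterminacy (multiplication by $\pm t^k$) through Lemma \ref{lem:propstap}(5), which is routine.
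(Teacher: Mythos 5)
Your proof is correct and is essentially the paper's own argument: both reduce mod $p$ via Lemma \ref{lem:propstap}(5), exhibit the same $D_p$-invariant complete flag in $\FF_p[\ZZ/p]$ (which is unique, since the module is uniserial) whose one-dimensional subquotients are $\ell+1$ trivial and $\ell$ sign characters, and conclude with parts (1), (2) and (4) of Lemma \ref{lem:propstap}. The only difference is presentational: the paper produces the flag from the explicit basis $v_i=\sum_{k=0}^{p-1}k^i s^k$, checking that $x$ acts diagonally by $(-1)^i$ and $y$ is unipotent triangular, whereas you use the powers of the augmentation ideal in $\FF_p[s]/(s^p)$ with $s=y-1$; your abelianization argument that a meridian must map to a reflection (so the sign character sends it to $-1$) makes explicit a point the paper leaves implicit.
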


\begin{proof}
 Let $D_p$ be the dihedral group of order $2p$ from
Equation \eqref{eqndihedralgp} and set $V_p:=\FF_p[s^{\pm 1}]/(s^p-1)$.
Consider
the representation
$ \varrho_p \co D_p
\to \Aut_{\ZZ}(V_p) $
given by
\[ x\cdot q(s)=q(s^{-1})\text{ and } y\cdot q(s)=sq(s)\text{ for any }q(s)\in V.\]
It is straightforward to see that this representation is isomorphic to the mod $p$ reduction of the representation
$\varrho\co D_p\to  \GL(p,\ZZ)$ from the introduction.

It now follows from Lemma \ref{lem:propstap} that it suffices to prove the following claim:

\begin{claim}
\[ \De_{K}^{\varrho_p \circ \al}(t)\equiv \left(\frac{\De_K(t)}{1-t}\right)^{\ell+1} \cdot
 \left(\frac{\De_K(-t)}{1+t}\right)^{\ell} \modd p.\]
\end{claim}

For $i=0,\dots,p-1$ we define
\[ v_i:=\sum_{k=0}^{p-1}k^is^k \in V_p.\]
Here we write $0^0=0$. It follows from the Vandermonde determinant that
$v_0,v_1,\dots,v_{p-1}$ form a basis for $V_p$. We now consider the action of $D_p$ on $V_p$ which is given by $\varrho$.

\begin{claim}
For $i=0,\dots,p-1$ we have
\[  x\cdot v_i=\sum_{k\in \FF_p}(-k)^is^{k}=(-1)^iv_i \text{ and }y\cdot v_i=\sum_{j=0}^i \binom{i}{j} (-1)^{i-j}v_j. \]
\end{claim}

We first note that
\[ x\cdot v_i=x\cdot \sum_{k\in \FF_p}k^is^{k}=\sum_{k\in \FF_p}k^is^{p-k}=\sum_{k\in \FF_p}(-k)^is^{k}=(-1)^iv_i.\]
We now consider the action of $y$ on $v_i$. We have
\begin{eqnarray*} y\cdot v_i=s\cdot v_i &=& \sum_{k=0}^{p-1}k^is^{k+1} \\
&=&\sum_{k=0}^{p-1}(k-1)^is^{k}\\
&=&\sum_{k=0}^{p-1}\sum_{j=0}^i \binom{i}{j} k^j(-1)^{i-j}s^{k}\\
&=&\sum_{j=0}^i \binom{i}{j} (-1)^{i-j}\sum_{k=0}^{p-1}k^js^{k}\\
&=&\sum_{j=0}^i \binom{i}{j} (-1)^{i-j}v_j.
\end{eqnarray*}
This concludes the proof of the claim.

We now see that with respect to the basis $v_0,\dots,v_{p-1}$ the representation $D_p\to \Aut_{\FF_p}(V_p)$ is given by
\begin{equation} \label{equ:as}
 x\mapsto \begin{bmatrix} 1 &0&\dots &0 \\ 0&-1&\dots &0 \\ 0&0&\ddots&\vdots \\ 0&\dots&0&1\end{bmatrix}
 \text{ and }
y\mapsto \begin{bmatrix} 1 &*&\dots &* \\ 0&1&\dots &* \\ 0&0&\ddots&\vdots \\ 0&\dots&0&1\end{bmatrix}.\end{equation}
We now denote by $\ep\co \pi_K \to \GL(1,\FF_p)$ the trivial representation and  by
$\tau \co \pi_K \to \GL(1,\FF_p)$ the representation which is given by sending the meridian to $-1$.
Note that $\ga(x)=-1$ and $\ga(y)=1$.
It now follows from the definitions, from \eqref{equ:as} and from Lemma \ref{lem:propstap} that
\begin{eqnarray*}
\De_{K}^{\varrho_p \circ \al}(t) &\equiv& \prod_{i=1}^{\ell+1}\De_{K}^\ep(t)\cdot \prod_{i=1}^{\ell}\De_{K}^\tau(t) \;\; \modd p \\
&\equiv& \left(\frac{\De_K(t)}{1-t}\right)^{\ell+1} \cdot
 \left(\frac{\De_K(-t)}{1+t}\right)^{\ell} \;\; \modd p.
\end{eqnarray*}
\end{proof}

\end{document}